\theoremstyle{plain}
\newtheorem{thr}{Theorem}[section]
\newtheorem{q}[thr]{Question}
\newtheorem{lem}[thr]{Lemma}
\newtheorem{prop}[thr]{Proposition}
\theoremstyle{definition}
\newtheorem{defi}[thr]{Definition}
\newtheorem{cor}[thr]{Corollary}
\def\P{\mathcal{P}}
\def\C{\mathcal{C}}
\newcommand{\DC}[2]{DC_{#1,#2}}
\newcommand{\DP}[2]{DP_{#1,#2}}
\title{An asymptotic resolution of a problem of Plesn\'{i}k}
\author{Stijn Cambie\footnote{Department of Mathematics, Radboud University Nijmegen, Postbus 9010, 6500 GL Nijmegen, The Netherlands. Email: \href{mailto:S.Cambie@math.ru.nl}{S.Cambie@math.ru.nl}. This work has been supported by a Vidi Grant of the Netherlands Organization for Scientific Research (NWO), grant number $639.032.614$.} }%
\date{}
\begin{document}
	\definecolor{xdxdff}{rgb}{0.49019607843137253,0.49019607843137253,1.}
	\definecolor{ududff}{rgb}{0.30196078431372547,0.30196078431372547,1.}
	
	\tikzstyle{every node}=[circle, draw, fill=black!50,
	inner sep=0pt, minimum width=4pt]
	
	\maketitle

	\begin{abstract}

		Fix $d \ge 3$.
		We show the existence of a constant $c>0$ such that any graph of diameter at most $d$ has average distance at most $d-c \frac{d^{3/2}}{\sqrt n}$, where $n$ is the number of vertices.
		Moreover, we exhibit graphs certifying sharpness of this bound up to the choice of $c$.
		This constitutes an asymptotic solution to a longstanding open problem of Plesn\'{i}k. Furthermore we solve the problem exactly for digraphs if the order is large compared with the diameter.

	\end{abstract}


\section{Introduction}

The average over the distances between all pairs of vertices is a fundamental parameter of a graph or network. Due to its basic character and applicability, it has arisen in diverse contexts, including efficiency of information, mass transport, molecular structure, and complex network topology, cf.~e.g.~\cite{CL}. This notion has been studied as early as 1947~\cite{Wie47}, but mathematically it is not yet fully understood. Our task in this paper is to essentially settle one of the most basic questions concerning its extremal behaviour in graphs of given diameter.

In 1984, J\`an Plesn\'{i}k~\cite{P84} determined the minimum average distance among all graphs of order $n$ and diameter $d$. He did this both for graphs and digraphs and characterized the extremal graphs. In Section~\ref{sec1} we state his results and give an alternative proof.

Determining sharp upper bounds depending on $n$ and $d$ has proven to be much more difficult. An open problem that Plesn\'{i}k had already asked was, `What is the maximum average distance among graphs of order $n$ and diameter
$d$?', both in the case of graphs and digraphs. 

After little progress, DeLaVi\~{n}a and Waller~\cite{conj} conjectured the following more concrete statement,
`Let $G$ be a graph with diameter $d>2$ and order $2d+1$ vertices. Then the average distance of $G$ is not larger than the average distance of the graph $C_{2d+1}$.'. 
This also remains open.

In 2014, Mukwembi and Vetr\'{i}k~\cite{MV} gave asymptotically sharp upper bounds for the average distance for trees with diameter $d$ up to $6$. 

In this paper, we solve the problem of Plesn\'{i}k in general for every $d$ and asymptotically as $n$ goes to infinity.
Our main results are summarized in the following two theorems.
\begin{thr}
There exist positive constants $c_1$ and $c_2$ such that for any $d \ge 3$ the following hold.
The maximum average distance among all graphs of diameter $d$ and order $n$ is between $d-c_1 \frac{d^{3/2}}{\sqrt n}$ and $d-c_2 \frac{d^{3/2}}{\sqrt n},$ i.e. it is of the form $d-\Theta\left( \frac{d^{3/2}}{\sqrt n} \right).$
\end{thr}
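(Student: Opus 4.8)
The plan is to treat the two inequalities separately. The lower bound $d-c_1 d^{3/2}/\sqrt n$ will come from an explicit construction, and the upper bound $d-c_2 d^{3/2}/\sqrt n$ will be proved in the equivalent form: every graph of order $n$ and diameter $d$ has total deficiency $\Sigma(G):=\sum_{u,v}\bigl(d-\mathrm{dist}(u,v)\bigr)\ge c\,n^{3/2}d^{3/2}$ for an absolute $c>0$, the sum being over all ordered pairs; indeed the average distance equals $d-\Sigma(G)/(n(n-1))$, so $\Sigma(G)=\Theta(n^{3/2}d^{3/2})$ is exactly what is needed.

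For the construction I would use a \emph{clique of brooms}. Assume first that $d=2\ell+1$ is odd (the even case needs only a cosmetic change, e.g.\ routing the hubs through one extra apex vertex instead of joining them into a clique). Take $p$ \emph{hubs} spanning a clique $K_p$; to each hub $h$ attach a pendant path $h,u_1,\dots,u_{\ell-1}$ and then attach $r$ leaves to $u_{\ell-1}$ (when $\ell=1$ this just means hanging $r$ leaves on $h$ itself), so that each of these $r$ leaves is at distance exactly $\ell$ from its hub. Then two leaves of different brooms are at distance $\ell+1+\ell=d$, two leaves of the same broom are at distance $2$, and a short check shows the diameter is exactly $d$; the order is $n\approx p(\ell+r)$. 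Taking $p\approx\sqrt{n/d}$ and $r\approx\sqrt{nd}$ makes $\Sigma(G)=\Theta(n^{3/2}d^{3/2})$: the dominant contribution comes from the $\approx p\binom r2$ pairs of leaves sharing a broom — each at distance $2$, hence of deficiency $d-2$, giving $\approx\tfrac12(pr^2)d\approx\tfrac12 n^{3/2}d^{3/2}$ — together with the pairs formed by a leaf and an internal path-vertex of another broom, which contribute the same order; one verifies that every other type of pair contributes only $O(n^{3/2}d^{1/2})+O(nd^2)=o(n^{3/2}d^{3/2})$ since $n\ge d$. Hence this graph has average distance $d-\Theta(d^{3/2}/\sqrt n)$.

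For the upper bound I would start from the identity $\sum_u\bigl(d-\mathrm{dist}(u,v)\bigr)=\sum_{t=0}^{d-1}|S_t(v)|$, where $S_t(v)$ is the ball of radius $t$ about $v$; summing over $v$ gives $\Sigma(G)=\sum_{t=0}^{d-1}N_{\le t}$ with $N_{\le t}$ the number of ordered pairs at distance at most $t$. Fix a threshold $\tau\approx\sqrt{nd}$ and call $v$ \emph{heavy} if $|S_{\lfloor d/2\rfloor}(v)|\ge\tau$. A heavy vertex contributes at least $\tfrac12 d\tau=\Omega(n^{1/2}d^{3/2})$ to $\Sigma(G)$, so if a positive fraction of the vertices are heavy the bound follows. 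The remaining, main case is when almost all vertices are \emph{light}. Here one exploits that the radius-$\lfloor d/2\rfloor$ balls must cover $V$: from $n=|S_d(v)|\le\sum_{u\in S_{\lfloor d/2\rfloor}(v)}|S_{\lceil d/2\rceil}(u)|$, summed over $v$, one gets $\sum_u|S_{\lfloor d/2\rfloor}(u)|\cdot|S_{\lceil d/2\rceil}(u)|\ge n^2$; combining this with the fact that lightness bounds these ball sizes, and with a greedy packing of light vertices that are pairwise at distance $>d/2$ (their radius-$\lfloor d/4\rfloor$ balls are disjoint, which forces at least $\approx\sqrt{n/d}$ of them), and feeding everything back into $\sum_t N_{\le t}$, should produce the required $\Omega(n^{3/2}d^{3/2})$.

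The step I expect to be the real obstacle is this light case, and in particular getting the exponents exactly right. Each of the obvious moves is off by a factor of about $\sqrt{n/d}$: the layer profile of a single BFS tree is nowhere near enough, and in the covering inequality the crude bound $|S_{\lceil d/2\rceil}(u)|\le n$ only yields $\Omega(nd)$. What has to be captured is that small balls are not ``free'': if many vertices have a genuinely tiny radius-$\lfloor d/2\rfloor$ ball, the graph is forced into a sunflower-like shape (a single centre with many pendant paths), whose deficiency is already $\Omega(n^2d)\gg n^{3/2}d^{3/2}$. So the extremal regime is exactly the one the clique of brooms sits in, where a typical radius-$\lfloor d/2\rfloor$ ball has size $\Theta(\sqrt{nd})$ and stays that large over a range of radii of length $\Theta(d)$, so that one should read off $\Omega(d\cdot\sqrt{nd})$ of deficiency per vertex there rather than only $d\cdot|S_{\lfloor d/2\rfloor}(v)|$ from a single radius. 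Carrying out that bookkeeping quantitatively and uniformly over the whole range $d+1\le n$ is where the difficulty lies.
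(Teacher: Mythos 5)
Your construction is sound and is essentially the paper's own (Figure~\ref{fig:graph}): brooms of depth roughly $\ell$ with $\Theta(\sqrt{nd})$ leaves each, hung on a central clique for odd $d$ or an apex for even $d$, with the dominant deficiency coming from same-broom pairs; that half of the argument needs no further comment. The genuine gap is in the upper bound, and it is exactly where you say it is: the case in which most vertices have small balls is left unproved, and the tools you propose do not close it. The covering inequality $\sum_u\lvert S_{\lfloor d/2\rfloor}(u)\rvert\,\lvert S_{\lceil d/2\rceil}(u)\rvert\ge n^2$ together with lightness only forces $\sum_u\lvert S_{\lceil d/2\rceil}(u)\rvert\ge n^2/\tau=n^{3/2}/\sqrt d$, which feeds back into $\sum_t N_{\le t}$ a contribution of order $d\cdot n^{3/2}/\sqrt d=n^{3/2}d^{1/2}$, short of the target by a factor of $d$; the packing of $\approx\sqrt{n/d}$ pairwise-far light vertices adds nothing of the right order either. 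So as written the proposal establishes the lower bound of the theorem but not the upper bound.

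The missing idea (the paper's Theorem~\ref{thr1}) is a bottleneck argument applied to a \emph{single} light vertex, with the lightness threshold placed at radius $\tfrac56 d$ rather than $\lfloor d/2\rfloor$. If some $u$ has $\lvert N_{\le\frac56 d}(u)\rvert<\sqrt{dn/3}$, then $b\ge n-\sqrt{dn/3}$ vertices lie at distance more than $\tfrac56 d$ from $u$, while the $\ge\tfrac16 d-\tfrac13$ layers $N_k(u)$ with $k\in[\tfrac23 d,\tfrac56 d]$ together contain fewer than $\sqrt{dn/3}$ vertices, so by pigeonhole one such layer has size $a=O(\sqrt{n/d})$. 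Assigning each far vertex to a gateway in that layer along a shortest path to $u$, convexity gives at least $\tfrac{b^2}{2a}-\tfrac b2=\Omega(\sqrt d\,n^{3/2})$ pairs of far vertices sharing a gateway, and each such pair is at mutual distance at most $2(d-k)\le\tfrac23 d$, hence contributes at least $\tfrac13 d$ to the deficiency, for a total of $\Omega(d^{3/2}n^{3/2})$. Two points your sketch misses: only one light vertex is needed (no global structural analysis of the light regime, no sunflower classification), and the threshold radius must exceed $\tfrac23 d$ --- with your choice $\lfloor d/2\rfloor$, two vertices sharing a gateway at distance $k\le d/2$ from $u$ are only guaranteed to be within $2(d-k)\ge d$ of each other, which saves nothing.
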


The proof for this result is given in Section~\ref{sec2}.
In Section~\ref{sec3}, we show slightly stronger upper bounds for trees.
This extends the results of Mukwembi and Vetrik~\cite{MV}.
Theorem~\ref{goeieConstantTrees} shows that in this case we can find constants $c_1$, $c_2$ which are fairly close for large $d$ and $n \to \infty$.

In Section~\ref{sec4}, we also settle the digraph version of the problem.

\begin{thr}\label{mainresdigraph}
	Given some integers $d \ge 2$ and $n>d$, the maximal possible total distance of a digraph with order $n$ and diameter $d$ is of the form $dn^2-d^2n+\Theta_d(1),$ and so the maximum average distance is of the form $d-\Theta\left(\frac{d^{2}}{ n}\right).$
\end{thr}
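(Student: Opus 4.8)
The plan is to prove Theorem~\ref{mainresdigraph} by matching an upper bound on the total distance $W(D) = \sum_{u\ne v} d(u,v)$ with an explicit construction. For the upper bound, fix a digraph $D$ of order $n$ and diameter $d$, and pick a pair $x,y$ with $d(x,y) = d$, realised by a shortest directed path $P = x = v_0, v_1, \dots, v_d = y$. The key observation is that for every vertex $w$, the distances $d(w,v_0), d(w,v_1),\dots,d(w,v_d)$ (and likewise $d(v_i,w)$ in the outgoing direction) cannot all be large: since consecutive $v_i$ are joined by an arc, the function $i \mapsto d(w,v_i)$ changes by at most $1$ at each step in one direction, so among the $d+1$ path vertices the values are forced to be spread out, and in particular many of them are at most roughly $d/2$. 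Summing $d(w,v_i) + d(v_i, w)$ over $i$ and then over all $w$, and comparing with the trivial bound $d$ on each distance, yields that $W(D) \le dn^2 - d^2 n + O_d(1)$; the shape $dn^2 - d^2 n$ comes from the fact that the ``missing'' amount on the path vertices alone is of order $d^2$ per vertex $w$, i.e. $\Theta(d^2 n)$ in total, while the $\Theta_d(1)$ error absorbs the $O(d^2)$ pairs among the $v_i$ themselves and boundary effects.

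For the lower bound one needs a digraph where almost all of the $n^2$ ordered pairs are at distance exactly $d$. The natural candidate is a ``blow-up of a directed cycle'' type construction, or more precisely a digraph built on a short directed path (or cycle) of length $d$ with one vertex replaced by a large independent-like set: take vertices $v_1,\dots,v_{d-1}$ forming a directed path, a large set $S$ of $n - d + 1$ vertices, arcs from $v_{d-1}$ into every vertex of $S$, and arcs from every vertex of $S$ back to $v_1$, plus whatever few extra arcs are needed to make the diameter exactly $d$ rather than smaller. Then for $u, u' \in S$ with $u \ne u'$ one has $d(u,u') = d$ (go $u \to v_1 \to \dots \to v_{d-1} \to u'$, and nothing shorter exists), so these $(n-d+1)(n-d)$ pairs each contribute $d$, giving total distance $d(n-d+1)(n-d) + O_d(1) = dn^2 - d^2 n + \Theta_d(1)$, matching the upper bound up to the additive constant. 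The average-distance reformulation $d - \Theta(d^2/n)$ then follows by dividing by $n(n-1)$.

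To get the exact value claimed (not just $\Theta_d(1)$ but presumably an exact formula once $n$ is large relative to $d$), the plan is to run the upper-bound argument more carefully: among the path vertices $v_0,\dots,v_d$, determine exactly the minimum possible value of $\sum_i \big(d(w,v_i) + d(v_i,w)\big)$ as a function of where $w$ sits relative to the path, optimise the resulting combinatorial quantity, and then show the construction above (possibly after tuning the few auxiliary arcs) attains it. This is essentially a discrete optimisation: each vertex $w$ contributes, via its distances to and from the $d+1$ special vertices, at least some explicit quantity $f(d)$, and one wants $W(D) \le (\text{trivial count}) - \sum_w (d\cdot 2(d+1) - f(d)) + (\text{correction for pairs inside } \{v_i\})$.

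The hard part will be the exact optimisation in the upper bound: controlling the interaction between the ``forward'' distances $d(w,v_i)$ and ``backward'' distances $d(v_i,w)$ simultaneously — in a digraph these are independent and both must be accounted for — and pinning down precisely how small their combined sum over the path can be, including the contribution of the $\binom{d+1}{2}$-ish pairs among the path vertices themselves and of vertices $w$ that happen to lie on or very near the path. Showing that the honest extremal digraph has the simple blow-up structure above (a rigidity/stability statement) is what upgrades $\Theta_d(1)$ to an exact constant, and is where most of the technical work will lie.
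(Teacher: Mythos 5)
Your lower bound is fine: the construction you describe is exactly the digraph $\DP{n}{d}$ of Figure~\ref{fig:digraph_d} (no extra arcs are needed; the diameter is already exactly $d$ because $d(u,u')=d$ for distinct $u,u'\in S$), and it gives $dn^2-d^2n+\Theta_d(1)$ as required. The problem is the upper bound. Your key claim is that, for a fixed diametral path $P=v_0\cdots v_d$, the one-directional Lipschitz property $d(w,v_{i+1})\le d(w,v_i)+1$ forces many of the values $d(w,v_i)$ to be small, so that each vertex $w$ incurs a deficit of order $d^2$ \emph{against the path vertices}. This does not follow and is false in general: the Lipschitz condition only bounds increases, so the sequence $i\mapsto d(w,v_i)$ can be constantly equal to $d$, and likewise for $i\mapsto d(v_i,w)$. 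One can build a digraph of diameter $d$ with a diametral path $v_0\cdots v_d$ and a vertex $w$ satisfying $d(w,v_i)=d(v_i,w)=d$ for \emph{every} $i$ (route all of $w$'s out-distances through a private out-path $w\to a_1\to\cdots\to a_{d-1}$ with $a_{d-1}$ dominating the $v_i$, and all in-distances through a private in-path $b_1\to\cdots\to b_{d-1}\to w$ fed by every $v_i$, adding a few arcs among the $a$'s and $b$'s to keep the diameter at $d$); such a vertex has path-deficit zero, and it can be replicated, so summing deficits against a single diametral path cannot yield the $d^2n$ saving. The forced deficit of a vertex $w$ lives on the distance layers $N_1(w),\dots,N_{d-1}(w)$ and $N_1^-(w),\dots,N_{d-1}^-(w)$, which must all be nonempty but need not meet any prescribed diametral path.

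There is a second, quantitative gap that your closing paragraph does not address. Even the correct per-vertex bound $\Delta_w=\sum_{v\ne w}(2d-d(w,v)-d(v,w))\ge d^2-d$ (from the nonempty layers), summed over all $w$, only gives $W(D)\le dn^2-\tfrac{d^2+d}{2}n$, i.e.\ half the required coefficient of $n$, because each pair's deficit is shared between its two endpoints. The paper's proof of Theorem~\ref{digraph_d_1} gets the full $d^2n$ by choosing a vertex $u$ of (near-)minimal total deficit, deducing that the set $A_u$ of vertices $v$ with $d(u,v)=d(v,u)=d$ has size $n-O(d^2)$, and then observing that for each $v\in A_u$ the shortest paths between $v$ and $u$ place all of $v$'s layer-witnesses inside the small set $B_u=V\setminus(A_u\cup\{u\})$; hence the deficits $\Delta_v(B_u)\ge d^2-d$ for distinct $v\in A_u$ are counted on disjoint pairs and add up to $(d^2-d)\lvert A_u\rvert$ without the factor $\tfrac12$. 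Some device of this kind (or another way to localize each vertex's deficit on a common small set) is essential both for the correct second-order term and for the exact extremal statement, and it is missing from your plan.
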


A more precise and asymptotically extremal statement is given in Theorem~\ref{digraph_d_1}.

The main first step in the proof of each of these results is to devise a graph or digraph which is almost extremal.
For this, we want many pairs of vertices which are of distance $d$ from one another.
In the graph case, we take many subtrees with many leaves. When the diameter is even, we just combine them into one tree.
When the diameter is odd, we use a central clique so that the distance between leaves of different subtrees are of distance $d$.
The construction is sketched in Figure~\ref{fig:graph}.
For some intuition about this construction, take two vertices at random.
Since the number of leaves is large, the probability that both vertices are leaves is large.
Similarly, since we have many subtrees, the probability that both leaves are in different subtrees is large.
Hence the probability that two vertices are at maximal distance is large, implying that the average distance is close to $d$ for this construction.
In the digraph case, the construction is even simpler. See Figure~\ref{fig:digraph_d}.
Every two vertices $\ell_i$ and $\ell_j$ are at distance $d$.
When $n$ is large and we choose two random vertices, the probability that they are both labeled with $\ell$ is large.
Hence the average distance will be close to $d$ again.

In the other direction, we take a graph of diameter $d$ and order $n$.
The idea is that many pairs of vertices cannot be at distance $d$ from each other.
If almost all vertices are at distance $d$ from a certain vertex $v$, their paths towards $v$ have many points in common and so the distance between these vertices is small. To make this rigorous, we apply the pigeonhole principle.

For the digraph case, we need another strategy, since we cannot use the edges in both directions to get short paths between vertices.
In this case we see that if there are many ordered pair of vertices at distance $d$, then the distance between some ordered pairs of vertices on the shortest paths are smaller than $d$.
We use this fact in a rigorous, structured  way to find a vertex $u$ such that for almost all other vertices $v$ we have $d(u,v)=d(v,u)=d$. From that, we can recover the structure of the extremal digraph.

\subsection{Notation}

A graph will be denoted by $G=(V,E)$ and 
a digraph will be denoted by $D=(V,A).$
The order $\lvert V \rvert$ will be denoted by $n$.

A cycle or directed cycle of length $k$ will be denoted by $C_k$ and $K_n$ will denote a complete graph or complete digraph on $n$ vertices.

Let $d(u, v)$ denote the distance between vertices $u$ and $v$ in a graph $G$ or digraph $D$, i.e. the number of edges in a shortest path from $u$ to $v$. 
The diameter of a graph or digraph on vertex set $V$ equals $\max_{u,v \in V} d(u,v).$
The total distance, also called the Wiener index, of a graph $G$ equals the sum of distances between all unordered pairs of vertices, i.e. $W(G)=\sum_{\{u,v\} \subset V} d(u,v).$
The average distance of the graph is $\mu(G)=\frac{W(G)}{\binom{n}{2}}$. 
The Wiener index of a digraph equals the sum of distances between all ordered pairs of vertices, i.e. $W(D)=\sum_{(u,v) \in V^2} d(u,v).$
The average distance of the digraph is $\mu(D)=\frac{W(D)}{n^2-n}.$

The distance between two subsets $X,Y \subset V$ is denoted by $d(X,Y)= \max\{d(x,y) \mid x \in X, y \in Y\}.$

For some vertex $v \in V(G)$, we denote its $k^{th}$ neighborhood with $N_k(v)=\{u \in V(G) \mid 0<d(u,v) = k\}$. We will also use $N_{\le k}(v)=\{u \in V(G) \mid 0< d(u,v) \le k\}$ and $N_{I}(v)=\{u \in V(G) \mid d(u,v) \in I\}$ for some subset or interval $I$.

In the remainder we prefer to state and prove results in terms of the Wiener index. 
Since the average distance is just a scaling of the Wiener index with a factor $\binom{n}{2}$ or $2 \binom{n}{2}$, the results can be easily interpreted in terms of the average distance $\mu$.

\begin{defi}
	Given a graph $G$ and a vertex $v$ of $G$, the blow-up of $v$ by a graph $H$ is constructed as follows.
	Take $G \backslash v$ and connect all initial neighbours of $v$ with all vertices of a copy of $H.$	
	When taking the blow-up of a vertex $v$ of a digraph $D$ by a digraph $D'$, a directed edge between a vertex $w$ of $D \backslash v$ and a vertex $z$ of $D'$ is drawn if and only if initially there was a directed edge between $w$ and $v$ in the same direction.
\end{defi}

\begin{defi}
	The sum of two graphs $G_1=(V,E_1)$ and $G_2=(V,E_2)$ is defined as $G_1+G_2=(V,E_1 \cup E_2).$
	The sum of two digraphs on the same vertex set is defined similarly.
\end{defi}

The statement $f(x)=O(g(x))$ as $x \to \infty$ implies that there exists fixed constants $x_0, M>0$, such that for all $x \ge x_0$ we have $\lvert f(x) \rvert \le M \lvert g(x) \rvert .$
Analogously, $f(x)=\Omega(g(x))$ as $x \to \infty$ implies that there exists fixed constants $x_0, M>0$, such that for all $x \ge x_0$ we have $\lvert f(x) \rvert \ge M \lvert g(x) \rvert.$
If $f(x)=\Omega(g(x))$ and $f(x)=O(g(x))$ as $x \to \infty$, then one uses $f(x)=\Theta(g(x))$ as $x \to \infty$.
We also write $f(x)=g(x)+O(h(x))$ when we know that $f(x)-g(x)=O(h(x))$ and analogously for $\Theta$.
Similarly, we have $O_d, \Omega_d$ and $\Theta_d$ if $x_0$ and $M$ may vary with $d$. For example $f(x)=O_d(g(x))$ as $x \to \infty$ implies that for every $d$, there exist constants $x_0, M>0$, such that for all $x \ge x_0$ we have $\lvert f(x) \rvert \le M \lvert g(x) \rvert.$

\section{Plesn\'{i}k's lower bounds}\label{sec1}

The results of Plesn\'{i}k~\cite{P84} are stated in the following theorems. 
We also give a short alternative proof for the result in the graph case, which can analogously be used to prove the digraph case as well.

\begin{thr}[\cite{P84}, Theorem 2]\label{p1}
	Let $G$ be a graph with $n$ vertices and diameter $d$. 
	Then 
	
	$$
	W(G) \ge 
	\begin{cases}
	\binom{d+2}{3}+\frac{1}4 (n-d-1)(2n+d^2+1) \hfill \mbox{ if d is odd}\\
	\binom{d+2}{3}+\frac{1}4 (n-d-1)(2n+d^2) \hfill \mbox{ if d is even}
	\end{cases}
	$$
	Moreover the extremal graphs are exactly the maximal graphs of diameter $d$ with all noncentral layers being trivial, i.e. any extremal graphs can be created by starting from a path of length $d$ and taking blow-ups with cliques at its central vertices.
	
\end{thr}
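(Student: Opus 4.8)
The plan is to show that among all graphs of order $n$ and diameter $d$, the Wiener index is minimized by the graphs described, via a two-step argument: first reduce to a structured family by an edge-addition/compression argument, then compute the minimum within that family. First I would observe that adding an edge to a graph never increases any distance, hence never increases $W(G)$, provided the diameter is not decreased. So it suffices to consider \emph{edge-maximal} graphs of diameter $d$: graphs $G$ with $\operatorname{diam}(G)=d$ such that adding any edge drops the diameter below $d$. For such a graph, fix a diametral path $P=v_0v_1\cdots v_d$ and consider the distance layers $L_i=N_i(v_0)$ for $i=0,1,\dots,d$; these partition $V(G)$ since $\operatorname{diam}(G)=d$ forces $d(v_0,u)\le d$ for all $u$. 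Edges only run within a layer or between consecutive layers.

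The key structural claim is that in an edge-maximal graph of diameter $d$ we may assume all noncentral layers are ``trivial'' in the sense that they cannot be made to contribute more than a single vertex's worth of distance savings; more precisely, I would argue that to minimize $W$ one should push as many vertices as possible into the central layer(s) ($L_{\lceil d/2\rceil}$ when $d$ is even, or $L_{\lfloor d/2\rfloor}$ and $L_{\lceil d/2\rceil}$ when $d$ is odd) and make each central layer a clique with complete bipartite connections to its neighbours — this is exactly the blow-up-with-cliques description. The heuristic is that a vertex placed in layer $L_i$ has eccentricity at least $\max(i,d-i)$, which is minimized at the center, and a vertex at the center is at distance at most $\lceil d/2\rceil$ from everything on $P$, contributing little to $W$. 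To make this rigorous I would take any edge-maximal $G$, and for each vertex $u\notin P$ show that relocating $u$ to a central layer (adjusting its edges so that it is adjacent to all of the central layer and the two neighbouring layers) does not increase $W(G)$ and keeps the diameter equal to $d$: the distances from $u$ to $P$ can only decrease, and the distances from $u$ to other relocated vertices are bounded by $\le d$ (for the odd case, two central layers guarantee the two endpoints of $P$ remain at distance $d$). Iterating, every extremal graph is obtained from $P_{d+1}$ by a clique blow-up at the central vertex/vertices.

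Once the family is pinned down, the remaining task is a direct computation. Let $n-d-1$ be the number of extra vertices, all placed at the center. For $d$ even, we have one central layer of size $n-d$ (the path vertex plus the extras), forming a clique, complete to layers $L_{d/2-1}$ and $L_{d/2+1}$, each of size $1$. For $d$ odd, the extras are split between the two central layers $L_{(d-1)/2}$ and $L_{(d+1)/2}$; by convexity the split that minimizes $W$ is as balanced as possible, but since any split gives the same $W$ for pairs straddling the two layers (distance $1$ within the center-block, and symmetric contributions to the path), one checks the stated formula is split-independent. Summing: the path part contributes $W(P_{d+1})=\binom{d+2}{3}$, each extra vertex $u$ contributes $\sum_{j=0}^{d}d(u,v_j)$ (a telescoping/arithmetic sum giving the $\frac14(n-d-1)d^2$-type term, with the $+1$ correction in the odd case coming from the parity of $\lceil d/2\rceil$), and the $\binom{n-d-1}{2}$ pairs of extras each contribute $1$ if in the same central layer or the relevant small distance otherwise, yielding the $\frac14(n-d-1)\cdot 2n$-type term. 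Collecting terms gives exactly the two displayed expressions.

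The main obstacle is the structural reduction, not the arithmetic: one must carefully verify that the relocation operation never \emph{increases} $W$ and never \emph{decreases} the diameter. The diameter-preservation is the subtle point — in the odd case it is precisely why two central layers (a ``central clique'' spanning a matching-like cut, per the introduction's remark) are needed, so that the two endpoints $v_0,v_d$ stay at distance exactly $d$ after all blow-ups; in the even case a single central layer suffices. A clean way to handle both the $W$-monotonicity and the diameter bound uniformly is to check, for the relocated vertex $u$, the two extremal distances $d(u,v_0)$ and $d(u,v_d)$ (which must sum to at least $d$ and be realizable as $\le d$ each), and to note that adding all ``central'' edges incident to $u$ can only shorten other distances while the diametral pair $\{v_0,v_d\}$ is untouched. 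Once this lemma is in place, edge-maximality plus the layer decomposition forces the claimed extremal structure and the computation closes the proof.
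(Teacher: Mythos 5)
Your overall plan (compress an arbitrary graph of diameter $d$ into the blow-up family without increasing $W$, then compute the minimum there) is a legitimate alternative in principle to the paper's argument, but as written it has a genuine gap at exactly the point where all the content of the theorem lies. The paper's proof is a direct lower bound: it fixes a diametral path $\P=u_0u_1\cdots u_d$ and shows, for each vertex $v\notin\P$, that $\sum_{j=0}^{d} d(v,u_j)\ge 1+\frac{d^2+2d}{4}$ for $d$ even and $\ge 1+\frac{(d+1)^2}{4}$ for $d$ odd, by pairing $u_i$ with $u_{d-i}$ and using the triangle inequality $d(u_i,v)+d(v,u_{d-i})\ge d(u_i,u_{d-i})=d-2i$; adding $W(\P)=\binom{d+2}{3}$ and the trivial bound $\binom{n-d-1}{2}$ on off-path pairs gives the stated formula, and the equality analysis gives the characterisation. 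Your proposal needs precisely this inequality to justify that relocating a vertex to the centre does not increase $W$, but you assert instead that ``the distances from $u$ to $P$ can only decrease,'' which is false for individual distances (a vertex in layer $L_1$ has $d(u,v_0)=1$ before relocation and $\lceil d/2\rceil$ after), and the heuristic that eccentricity is minimised at the centre does not yield the required bound on the \emph{sum} of distances to the path. Without the pairing argument, or an equivalent, the monotonicity of $W$ under relocation is unproved, and your computation of $W$ for the target family (which is correct) proves nothing about general $G$.

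A secondary problem is the iteration: relocating vertices one at a time deletes $u$'s old edges, which can lengthen shortest paths between \emph{other} pairs that routed through $u$, so $W$ may increase at intermediate stages even if $u$'s own distance sum drops. This is repairable by comparing the original graph directly with the fully relocated graph, pair-type by pair-type (path--path distances are unchanged since $\P$ is a shortest path, off-path pairs drop to the minimum value $1$, and off-path-to-path sums are controlled by the pairing inequality) --- but that repaired argument is essentially the paper's direct proof with extra bookkeeping, and the equality characterisation would additionally require you to track strictness through every relocation, which the direct approach gets for free.
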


\begin{proof}
	When $d=1$, the unique extremal graph is obviously $K_n$.
	Now consider $d>1$ and an extremal graph $G$. As $G$ has diameter $d$, it has two vertices $u_0$ and $u_d$ with $d(u_0,u_d)=d$. Take a shortest path $\P=u_0 u_1 \ldots u_{d-1}u_d$ between them.
	Note that every vertex $v$ which does not belong to $\P$ and every $0\le i < \frac d2$ satisfies $d(u_i,v)+d(v,u_{d-i}) \ge d(u_i,u_{d-i})=d-2i.$
	In particular when $d$ is even, we have
	$$\sum_{j=0}^{d} d(v, u_j) \ge 1+ \sum_{i=0}^{\frac d2 -1} d-2i =1+\frac{d^2+2d}{4}$$ and similarly for $d$ being odd, we have 
	$$\sum_{j=0}^{d} d(v, u_j) \ge 2+ \sum_{i=0}^{\frac{d-1}2 -1} d-2i =1+\frac{(d+1)^2}{4}.$$
	Together with $\sum_{v,w \in V(G \backslash \P)} d(v,w) \ge \binom{n-d-1}{2}=\frac{(n-d-1)(2n-2d-4)}{4}$ and $W(\P)=\binom{d+2}{3}$ we get the bounds on $W(G)$.
	Equality occurs if and only if equality occurs in every step, from which the characterization of the extremal graphs follows as well.
\end{proof}

\begin{thr}[\cite{P84}, Theorem 3]\label{p2}
	Let $D$ be a digraph with $n$ vertices and diameter $d$. 
	Then 
	
	$$
	W(D) \ge 
	\begin{cases}
	\dfrac{d(d+1)(d+5)}6+\frac{1}4 (n-d-1)(4n+(d+1)^2 ) \hfill \mbox{ if d is odd}\\
	\dfrac{d(d+1)(d+5)}6+\frac{1}4 (n-d-1)(4n+d^2+2d) \hfill \mbox{ if d is even}
	\end{cases}
	$$
	Moreover the extremal digraphs are exactly the maximal digraphs of diameter $d$ with all noncentral layers being trivial, i.e. any extremal digraph can be created by some blow-ups by cliques at the central vertices of a digraph $D'$. Here $D'$ is the sum of a transitive tournament on $d+1$ vertices and the unique longest path in its complement.
\end{thr}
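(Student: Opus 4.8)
The plan is to run the same argument as in the proof of Theorem~\ref{p1}, now tracking the two directions of each distance separately. One may assume $d\ge 2$, since $d=1$ forces $D=K_n$, which is consistent with the stated formula. Fix an extremal digraph $D$, pick $u_0,u_d$ with $d(u_0,u_d)=d$, and take a shortest directed path $\P=u_0u_1\ldots u_d$; since subpaths of shortest paths are shortest, $d(u_i,u_j)=j-i$ whenever $i<j$. Split the $n^2-n$ ordered pairs of distinct vertices into three families: both endpoints on $\P$, exactly one on $\P$, neither on $\P$. The first family contributes $\sum_{i<j}d(u_i,u_j)+\sum_{i>j}d(u_i,u_j)\ge\binom{d+2}{3}+\binom{d+1}{2}=\frac{d(d+1)(d+5)}{6}$, where the backward distances $d(u_i,u_j)$ with $i>j$ are bounded below by $1$ each. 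The third family contributes at least $(n-d-1)(n-d-2)$, one per ordered pair.

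The work is in the middle family. For a vertex $v\notin\P$, let $S(v)=\sum_{j=0}^{d}(d(u_j,v)+d(v,u_j))$ be the contribution of $v$ against $\P$ in both directions. Re-indexing the second sum by $j\mapsto d-j$ rewrites this as $S(v)=\sum_{i=0}^{d}(d(u_i,v)+d(v,u_{d-i}))$, the directed analogue of the pairing used for Theorem~\ref{p1}. For $i<d/2$, concatenating a shortest $u_i$--$v$ path with a shortest $v$--$u_{d-i}$ path gives $d(u_i,v)+d(v,u_{d-i})\ge d(u_i,u_{d-i})=d-2i$; and for every $i$ one has $d(u_i,v)+d(v,u_{d-i})\ge 2$ because $v\notin\P$. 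Hence each summand is at least $\max(d-2i,2)$, and summing the resulting arithmetic progression gives $S(v)\ge\frac{(d+2)(d+4)}{4}$ for $d$ even and $S(v)\ge\frac{(d+3)^2}{4}$ for $d$ odd.

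Adding the three contributions gives $W(D)\ge\frac{d(d+1)(d+5)}{6}+(n-d-1)\,s+(n-d-1)(n-d-2)$, with $s$ denoting $\frac{(d+2)(d+4)}{4}$ or $\frac{(d+3)^2}{4}$ according to the parity of $d$, and a routine simplification shows the right-hand side is exactly the claimed value in both cases. For the characterization, equality forces equality in every step: each backward distance $d(u_i,u_j)$ with $i>j$ equals $1$, the vertices off $\P$ induce a complete digraph, and for each off-path $v$ the summand inequalities are tight, that is, $d(u_i,v)+d(v,u_{d-i})=d-2i$ for $i<d/2$ and $=2$ for $i\ge d/2$. Chaining these tight inequalities through the triangle inequality, so as to separate $d(u_i,v)$ from $d(v,u_{d-i})$, pins down every distance incident to $v$: it must equal the distance of a vertex sitting at a central layer of $D'$. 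Since pairwise distances determine the arc set (an arc $x\to y$ exists precisely when $d(x,y)=1$), this reconstructs $D$ as the blow-up by a complete digraph of the central vertex (or central vertices) of $D'$, and $D'$ itself as the sum of a transitive tournament on $\P$ and the longest path in its complement.

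I expect this last step --- deducing the exact arc set from the tight inequalities, and in particular handling the two central positions and the resulting one-parameter family when $d$ is odd --- to be the only genuinely delicate part, mirroring (but going a little beyond) the one-line characterization in Theorem~\ref{p1}.
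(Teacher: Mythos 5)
Your proposal is correct and is essentially the paper's intended argument: the paper gives no separate proof of Theorem~\ref{p2}, stating only that the short proof of Theorem~\ref{p1} ``can analogously be used to prove the digraph case as well,'' and your write-up is exactly that adaptation (ordered pairs, the extra $\binom{d+1}{2}$ from backward distances along $\P$, and the per-index bound $\max(d-2i,2)$ in the mixed sum, all of which recombine to the stated constants). The equality analysis you flag as delicate is handled in the paper by the same one-line ``equality in every step'' remark as in the graph case, so your sketch of chaining the tight inequalities through the triangle inequality is, if anything, more explicit than the source.
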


Note that using the same ideas, we easily get an alternative proof for a result of Ore~\cite{Ore68} (Theorem $3.1$ there) and its digraph version as well, which we state for completeness.

\begin{thr}
	Let $D$ be a digraph of order $n$ and diameter $d \ge 2$. 
	Then its size $$\lvert A(D) \rvert \le (n-d-1)(n+2)+\binom{d+2}{2}-1.$$
	Equality holds if and only if $D$ can be created by blow-ups at $1$ or $2$ consecutive non-end vertices of a digraph $D'$, which is the sum of a transitive tournament on $d+1$ vertices and the unique longest path in its complement.
\end{thr}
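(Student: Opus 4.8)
The plan is to follow the same pigeonhole-flavoured argument used in the proof of Theorem~\ref{p1}, but now counting \emph{arcs} instead of \emph{distances}. First I would fix an extremal digraph $D$ of diameter $d\ge 2$, pick vertices $u_0,u_d$ with $d(u_0,u_d)=d$, and fix a shortest directed path $\P=u_0u_1\ldots u_d$. The key local observation is the directed analogue of the one in the previous proof: for a vertex $v\notin\P$ and any $0\le i<d$, the existence of an arc $v\to u_i$ and of an arc $u_{i+1}\to v$ cannot happen simultaneously unless $d-1\le 1$, because otherwise $u_{i+1}\to v\to u_i$ would be a path of length $2$ realising a ``shortcut'', contradicting that $\P$ is shortest (more precisely $d(u_{i+1},u_i)\ge d-1$). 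More generally, among the $2(d+1)$ possible arcs between $v$ and the vertices of $\P$, only a bounded number — roughly $d+3$, matching $\binom{d+2}{2}-1$ summed over the path — can be present; I would make this precise by a short case analysis on the indices $i$ for which $v\to u_i$ or $u_i\to v$ is an arc, using that consecutive ``in''- and ``out''-indices must be close.

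Next I would bound the arcs \emph{inside} $D\setminus\P$ trivially by $(n-d-1)(n-d-2)$ (the complete digraph on those vertices), bound the arcs between $\P$ and $D\setminus\P$ by the per-vertex bound from the previous step, and add the arcs inside $\P$, which is exactly $\binom{d+2}{2}-1$ for the ``transitive tournament plus longest reverse path'' configuration. Summing these three contributions should yield the claimed bound $(n-d-1)(n+2)+\binom{d+2}{2}-1$ after collecting terms; I would double-check the constant by testing $d=2$ and by comparing with Ore's undirected statement. For the equality case, I would argue that equality forces: (i) $D\setminus\P$ induces a complete digraph, (ii) every $v\notin\P$ attaches to $\P$ in the unique way that attains the per-vertex arc maximum, and (iii) $\P$ together with its back-arcs is exactly the sum of the transitive tournament on $d+1$ vertices and the longest path in its complement. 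Combining (i)--(iii) is precisely the statement that $D$ is obtained by blow-ups at one or two consecutive non-end vertices of the digraph $D'$ described in Theorem~\ref{p2}; here I would lean on the characterisation already established there, since the extremal digraphs for the Wiener-index lower bound and for the size upper bound turn out to coincide structurally.

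The main obstacle I anticipate is getting the per-vertex arc count between $v\notin\P$ and $\P$ exactly right, including the boundary indices $i=0$ and $i=d$ and the parity-dependent behaviour near the centre of $\P$, so that the sum over all $v$ telescopes cleanly into $(n-d-1)(n+2)$ rather than something off by a lower-order term. A clean way to organise this is to phrase it as: if $v\to u_i$ and $v\to u_j$ with $i<j$ then $j-i\le 1$ is false in general, but the in-neighbourhood of $v$ on $\P$ and the out-neighbourhood of $v$ on $\P$ are each ``intervals up to the shortcut constraint'', and their total size is controlled by $d(u_j,u_i)\ge j-i$ together with $d(u_i,u_j)=j-i$. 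I would set up a small optimisation over which sub-intervals of $\{0,\dots,d\}$ can simultaneously be in- and out-neighbourhoods, show the optimum is the configuration coming from $D'$, and read off both the numeric bound and the rigidity needed for the equality characterisation. The rest is bookkeeping analogous to the proof of Theorem~\ref{p1}.
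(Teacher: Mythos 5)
The paper gives no proof of this statement at all: it merely remarks that it follows from ``the same ideas'' as the proof of Theorem~\ref{p1}, so your proposal is in effect the intended argument spelled out. Your global decomposition is the right one and the arithmetic does close up: at most $(n-d-1)(n-d-2)$ arcs inside $V\setminus\P$, at most $d+4$ arcs between each $v\notin\P$ and $\P$, and at most $d+\binom{d+1}{2}=\binom{d+2}{2}-1$ arcs inside $\P$ (the $d$ consecutive forward arcs plus all $\binom{d+1}{2}$ backward arcs), which sums to $(n-d-1)(n+2)+\binom{d+2}{2}-1$ and is attained by the blow-ups of $D'$.

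However, your stated ``key local observation'' is wrong, and in a way that would derail the count if followed literally. The arcs $v\to u_i$ and $u_{i+1}\to v$ \emph{can} coexist: they create a walk of length $2$ from $u_{i+1}$ to $u_i$, which is a \emph{backward} distance along $\P$, and backward distances on a diametral shortest path are unconstrained (in the extremal digraph $d(u_{i+1},u_i)=1$). Your claims $d(u_{i+1},u_i)\ge d-1$ and, later, $d(u_j,u_i)\ge j-i$ are both false. The only constraint available is the forward one, $d(u_i,u_j)=j-i$ for $i<j$, so the forbidden configuration is $u_i\to v\to u_j$ with $j\ge i+3$. Writing $O=\{j: v\to u_j\}$ and $I=\{i: u_i\to v\}$, this gives $\max O\le \min I+2$, hence $\lvert O\rvert+\lvert I\rvert\le(\max O+1)+(d+1-\min I)\le d+4$ (not $d+3$; your aside about this matching $\binom{d+2}{2}-1$ does not parse). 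Equality forces $O=\{0,\dots,a+2\}$ and $I=\{a,\dots,d\}$ for some $a$, i.e.\ $v$ is attached as a copy of $u_{a+1}$; and for two outside vertices $v,w$ the forward constraint applied to $u_i\to v\to w\to u_j$ forces $\lvert a_v-a_w\rvert\le1$, which is exactly the rigidity you need for the ``one or two consecutive non-end vertices'' characterisation. With the direction of the shortcut inequality corrected throughout, your plan goes through.
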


\section{An asymptoticaly sharp upper bound}\label{sec2}

In this section we solve the problem of Plesn\'{i}k~\cite{P84} for graphs asymptotically, in the sense that we determine the order of magnitude of the gap with respect to the trivial upper bound. 

\begin{thr}\label{mainres}
The maximum Wiener index of a graph $G$ of order $n$ and diameter $2$ equals $(n-1)^2$. Equality holds if only if the graph is a star.

The maximum Wiener index of a graph with order $n$ and diameter $d \ge 3$ is $\frac{d}{2}n^2-d^{3/2}\Theta( n^{3/2}).$
\end{thr}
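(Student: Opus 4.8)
The plan is to handle the two regimes separately: the diameter-$2$ case is a short direct argument, while the $d\ge 3$ case requires matching lower and upper bounds on the Wiener index.

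For diameter $2$, every nonadjacent pair contributes distance $2$ and every edge contributes distance $1$, so $W(G)=2\binom{n}{2}-|E(G)|=n(n-1)-|E(G)|$. To maximize $W$ we minimize the number of edges subject to diameter exactly $2$; a connected graph on $n$ vertices has at least $n-1$ edges, and diameter $2$ forces connectedness, so $W(G)\le n(n-1)-(n-1)=(n-1)^2$. Equality forces $G$ to be a tree of diameter $2$, i.e.\ a star, and conversely the star $K_{1,n-1}$ has diameter $2$ and meets the bound. This settles the first statement.

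For $d\ge 3$, I would prove the lower bound $\frac d2 n^2 - O(d^{3/2} n^{3/2})$ by the explicit construction sketched in the introduction (Figure~\ref{fig:graph}): take roughly $t$ subtrees, each a path of length about $d/2$ from a common centre (using a central clique when $d$ is odd so that leaves of distinct subtrees are at distance exactly $d$), with the remaining vertices distributed as leaves, choosing $t=\Theta(\sqrt{n/d})$ so as to balance the two loss terms. A random pair of vertices lies in two distinct ``leaf bunches'' with probability $1-O(1/t)-O(t\cdot(d/n))$; when it does, the distance is exactly $d$, and otherwise the distance is at least $1$, so the total distance is at least $d\binom n2 - O\!\big(d\,\binom n2(1/t + td/n)\big) = \frac d2 n^2 - O(d\sqrt{d}\,n^{3/2})$ after optimizing $t$. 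This gives the $\Omega$ direction, i.e.\ the maximum Wiener index is at least $\frac d2 n^2 - O(d^{3/2}n^{3/2})$.

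The harder direction is the matching upper bound: every graph of diameter $d$ and order $n$ has $W(G)\le \frac d2 n^2 - \Omega(d^{3/2}n^{3/2})$. Fix a diametral pair $u,w$ and a shortest $u$--$w$ path. For a vertex $v$, consider the distances $d(u,v)$ and $d(w,v)$; since $d(u,v)+d(v,w)\ge d$, at most one of these can be much smaller than $d/2$, so a positive fraction of vertices lie in $N_{\le d/2}(u)$ or in $N_{\le d/2}(w)$ — say the former, for at least $n/2$ vertices. Now partition these vertices by the pair $(d(u,v), \text{the vertex of } N_1(u) \text{ on some chosen shortest } u\text{--}v\text{ path})$, or more robustly by their ``entry point'' at a well-chosen layer: by pigeonhole, if $N_{\le d/2}(u)$ is large then some layer $N_k(u)$ with $k$ of order $d$ is small, of size $O(n/d)$, and then \emph{every} vertex beyond that layer routes through $N_{k}(u)$, so two such vertices $v,v'$ with the same entry vertex satisfy $d(v,v')\le d(v,u)+d(u,v') - 2k$, i.e.\ the distance drops by $2k=\Omega(d)$. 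Counting: a constant fraction of the $\binom n2$ pairs either have $\min(d(u,v),d(u,v'))$ bounded away from $d$, or share an entry vertex in the small layer and hence lose $\Omega(d)$ in distance; since the small layer has size $O(n/d)$, by convexity the number of same-entry-vertex pairs among $\Omega(n)$ vertices is $\Omega(n^2/(n/d)) = \Omega(dn)$ — which is far too weak.

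\emph{Correcting the counting.} The right way is to iterate over many layers simultaneously: among the $\Theta(d)$ layers $N_k(u)$, $1\le k\le d/2$, either many of them are small (each of size $O(n/d)$) giving many ``bottlenecks'', or many are large, in which case $N_{\le d/2}(u)$ already contains $\Omega(\sqrt{dn})\cdot\sqrt{d}$-worth of... — more cleanly: choose the threshold $s=\Theta(\sqrt{n/d})$. If fewer than $s$ of the layers $N_1(u),\dots,N_{\lfloor d/2\rfloor}(u)$ have size $\ge s$, then at least $d/2 - s$ layers have size $<s$, each a bottleneck causing a distance loss of $\Omega(d)$ for all pairs separated by it, and summing these losses over the pigeonhole classes yields a total deficit of $\Omega(d)\cdot \binom n2 \cdot (\text{fraction of pairs caught}) = \Omega(d^{3/2}n^{3/2})$. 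If instead $\ge s$ layers have size $\ge s$, then $|N_{\le d/2}(u)|\ge s^2 = \Omega(n/d)$ is not enough — so instead we get $\sum_{k\le d/2}|N_k(u)| \ge s\cdot s$ is still only $\Omega(n/d)$; the genuinely correct pivot is that a vertex in $N_k(u)$ is at distance $\le d - (d/2 - k) = d/2+k < d$ from a vertex in $N_{d/2}(w)$-side, so \emph{large intermediate layers directly contribute pairs at distance $<d$}: if $\sum_{k=d/4}^{3d/4}|N_k(u)|\ge \Omega(\sqrt{dn})$ then pairing these with the $\Omega(n)$ far vertices gives $\Omega(\sqrt{dn}\cdot n)=\Omega(\sqrt d\, n^{3/2})$ pairs each losing $\Omega(d)$ — wait, that loses only $\Omega(1)$ per such pair near the middle.

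I will stop hedging in the plan: the key mechanism is \emph{(i)} a bottleneck layer of size $m$ forces all $\Omega(n^2)$ pairs crossing it to lose $\Omega(d)$ \emph{unless} they share one of the $m$ entry vertices, and sharing costs a separate bounded loss, so by convexity over $m$ entry classes the crossing pairs have total distance at most $(\text{pairs})\cdot d - \Omega(n^2/m)\cdot d$ when entry classes are balanced, but we get to \emph{choose} the smallest such layer, whose size is $m=O(n/d)$ only in the worst case; averaging the bound $\prod$ over the $\Theta(d)$ candidate layers and using Cauchy--Schwarz to say that not all of them can have size $\gg \sqrt{n/d}$ produces the clean deficit $\Omega(d^{3/2}n^{3/2})$. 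The main obstacle, and the place I expect the real work to be, is making this averaging-over-layers argument rigorous: controlling simultaneously the sizes of all $\Theta(d)$ neighbourhood layers of $u$ (and symmetrically of $w$), pinning down the constant in the size of the bottleneck layer, and verifying that the distance-loss bookkeeping over the pigeonhole classes genuinely sums to $\Omega(d^{3/2}n^{3/2})$ rather than a lower-order quantity. Once that is done, combining with the lower-bound construction gives $W = \frac d2 n^2 - d^{3/2}\Theta(n^{3/2})$ as claimed.
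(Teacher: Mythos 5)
Your diameter-$2$ argument and your lower-bound construction are correct and essentially identical to the paper's (Theorem~\ref{thr2} uses exactly the broom construction of Figure~\ref{fig:graph} with $\Theta(\sqrt{n/\ell})$ branches, and balancing the two loss terms as you do). The gap is in the upper bound, and you have in effect flagged it yourself: you never establish why the bottleneck layer can be taken of size $O(\sqrt{n/d})$ rather than $O(n/d)$, and your proposed repairs do not work. The claim that Cauchy--Schwarz shows not all of the $\Theta(d)$ candidate layers can have size $\gg\sqrt{n/d}$ is false: the layers of a single vertex partition almost all of $V$, so their average size is $n/d\gg\sqrt{n/d}$, and they can all simultaneously have that size. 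Your other pivot --- that a vertex in a middle layer $N_k(u)$ with $d/4\le k\le 3d/4$ is at distance $<d$ from the far vertices --- is also false ($d(v,z)\le d$ is all one can say for $v\in N_k(u)$, $z\in N_d(u)$), and your threshold attempt (``at least $s$ layers of size at least $s$'' with $s=\sqrt{n/d}$) only yields $\sum_k\lvert N_k(u)\rvert\ge n/d$, which, as you note, is useless. A further bookkeeping error: the entry-point mechanism gives no loss at all for layers at depth $k\le d/2$, since two vertices at distance at most $d-k$ from a common entry vertex are only forced within distance $2(d-k)\ge d$ of each other; one must use layers at depth strictly beyond $d/2$ (the paper takes $k\in[\frac23 d,\frac56 d]$, so each shared-entry pair loses at least $d-2(d-k)\ge d/3$).

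The missing idea is a dichotomy quantified over \emph{all} vertices rather than over the layers of one diametral vertex. Let $S=\{u: \lvert N_{\le \frac56 d}(u)\rvert\ge\sqrt{dn/3}\}$. If $S=V$, then summing over all $u$ gives at least $\frac12 n\sqrt{dn/3}$ unordered pairs at distance at most $\frac56 d$, each contributing a deficit of at least $d/6$, for a total deficit $\Omega(d^{3/2}n^{3/2})$; this is the step you cannot reproduce with a single base vertex, because one vertex only witnesses $O(\sqrt{dn})$ such pairs. If instead some $u\notin S$, then $\lvert N_{[\frac23 d,\frac56 d]}(u)\rvert\le\sqrt{dn/3}$, so pigeonhole over the $\Theta(d)$ layers in that range produces a layer $N_k(u)$ of size $O(\sqrt{n/d})$ with $k\ge\frac23 d$, while $\lvert N_{(\frac56 d,d]}(u)\rvert\ge n-\sqrt{dn/3}$; now your convexity count over entry classes gives $\Omega\bigl(n^2/\sqrt{n/d}\bigr)=\Omega(\sqrt d\,n^{3/2})$ shared-entry pairs, each losing at least $d/3$, and the deficit $\Omega(d^{3/2}n^{3/2})$ follows. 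Without this two-case structure the argument as written does not close.
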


\begin{proof}
For the first part, note that if any vertex of $G$ has degree $1$, the condition on the diameter implies that the graph contains a star and so the maximum is attained by the star.
Otherwise, every vertex has at least two neighbors at distance $1$. So there are at least $n$ pairs of vertices at distance $1$ of each other, implying that $W(G) \le 2\binom{n}{2}-n=n^2-2n<(n-1)^2$.

Alternatively, for a graph with diameter $2$, one has that $W(G)=2\binom{n}{2}-\lvert E \rvert \le (n-1)^2$. Equality does hold if and only if $G$ is a tree with diameter $2$, i.e. $G$ is a star.

%
%

The second part is a consequence of Theorem~\ref{thr1} and Theorem~\ref{thr2}.
\end{proof}

\begin{thr}\label{thr1}
	
There is a function $f_1$ for which $f_1(d)=\frac{1}{12\sqrt 3}d^{3/2}-O(\sqrt d)$ as $d \to \infty$ for which the following hold.
For $d \ge 3$ fixed, the Wiener index of a graph with order $n$ and diameter $d$ is at most, as $n \to \infty$,
	$$\frac d2 n^2 -f_1(d) n^{3/2} + O_d(n).$$
%
%
%
\end{thr}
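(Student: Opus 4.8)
The plan is to bound $W(G) = \sum_{\{u,v\}} d(u,v)$ by showing that, no matter how $G$ is structured, a quantifiably large number of vertex pairs must be at distance strictly less than $d$, and moreover that the deficiency accumulated from those pairs is of order at least $f_1(d)\,n^{3/2}$. Since the trivial bound is $W(G) \le \frac{d}{2}n^2 - \frac{d}{2}n$ (every pair contributes at most $d$), it suffices to prove a lower bound of the form $\sum_{\{u,v\}} (d - d(u,v)) \ge f_1(d)\,n^{3/2} - O_d(n)$.

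The first step is to fix a diametral pair $u_0, u_d$ with a shortest path $\P = u_0 u_1 \cdots u_d$, exactly as in the proof of Theorem~\ref{p1}, and to partition $V(G)$ according to distance from $u_0$, writing $L_i = N_i(u_0)$ for the $i$-th layer. A vertex $v \in L_i$ satisfies $d(u_0,v) = i$, so it can be at distance $d$ from $u_0$ only if $i = d$; more usefully, for any two vertices $v \in L_i$ and $w \in L_j$ one has $d(v,w) \ge |i-j|$ but also $d(v,w) \le d(v,u_0) + d(u_0,w)$ is useless in the wrong direction — instead I would use that a shortest $v$–$w$ path must pass between consecutive layers, so its length is at least $|i - j|$, while if $v$ and $w$ lie in the \emph{same} layer $L_i$ then any common ancestor structure forces $d(v,w)$ to be small only when the layer is itself small. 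The key combinatorial input is the pigeonhole idea flagged in the introduction: if a single layer $L_i$ is very large, then (since the graph has diameter $d$ and hence bounded layer-to-layer expansion is not guaranteed, but) we can instead argue on the set $S$ of vertices at distance exactly $d$ from $u_0$ — if $|S|$ is large, the shortest paths from the vertices of $S$ back to $u_0$ fan out through the $d-1$ intermediate layers, so two such paths agree on a long common segment, forcing the corresponding endpoints to be close.

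Making this quantitative is where the constant $\frac{1}{12\sqrt 3}$ and the exponent $3/2$ emerge, and this is the main obstacle. I would set a threshold parameter $t$ (to be optimized, and I expect $t = \Theta(\sqrt{dn})$) and split into two regimes. If some layer $L_i$ with $1 \le i \le d-1$ has $|L_i| \ge t$, then restricting attention to that layer: all $\binom{|L_i|}{2}$ pairs inside $L_i$ have distance at most, roughly, $2\min(i, d-i) \le d$, but more carefully one shows pairs in $L_i$ together with $u_0$ on one side and $u_d$ on the other give a deficiency of order $|L_i| \cdot (\text{something})$; summed over a spread-out collection of heavy layers this already beats $t \cdot d$. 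If on the other hand \emph{every} intermediate layer satisfies $|L_i| < t$, then the total number of vertices in layers $1, \ldots, d-1$ is at most $(d-1)t$, so all but at most $(d-1)t + 1$ vertices lie in $L_0 \cup L_d$, i.e. at least $n - (d-1)t - 1$ vertices are at distance $d$ from $u_0$; but then, as in Theorem~\ref{p1}, each such vertex $v$ contributes $\sum_{j=0}^{d} d(v,u_j) \le$ (the path budget) which is roughly $\frac{d^2}{4}$ less than $d(d+1)$, and more importantly pairs of such vertices relative to $\P$ lose a total of order $n \cdot d$; combined over pairs among the $n - (d-1)t$ far vertices themselves (which funnel through the small middle layers) we again collect $\Omega(n^2/(dt)) \cdot d$ deficiency. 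Balancing the two regimes — the heavy-layer loss grows with $t$, the sparse-layer loss shrinks with $t$ — gives the optimal $t \asymp \sqrt{dn}$ and a deficiency of order $\sqrt{d}\cdot\sqrt{dn}\cdot\sqrt{d}/\sqrt{d}$-type expression that simplifies to $\Theta(d^{3/2}\sqrt{n})$; tracking the constants carefully through the optimization (in particular the factor coming from how distances inside a heavy layer compare to $d$, and the $\frac{1}{3}$ from a quadratic-in-$i$ sum $\sum i(d-i)$) yields the claimed $f_1(d) = \frac{1}{12\sqrt 3}d^{3/2} - O(\sqrt d)$. The remaining $O_d(n)$ error absorbs the $+1$'s, the contribution of $\P$ itself, and the lower-order terms in $f_1$.

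Throughout, the technical care needed is: (i) to make the "paths fan out through $d-1$ layers, hence two of them share a long segment" argument precise — this is a pigeonhole on the sequence of layer-intersections of each shortest path, giving that among any $k$ far vertices, two have paths agreeing up to layer roughly $d - d/\log$-type bound, but actually one wants the cruder statement that \emph{average} pairwise distance among far vertices is at most $d - \Omega(d/(dt/n))$; (ii) to handle the parity of $d$ and the two versions of the path budget $\frac{d^2+2d}{4}$ vs $\frac{(d+1)^2}{4}$ from Theorem~\ref{p1}, which only affects lower-order terms; and (iii) to confirm the optimization genuinely produces the stated leading constant rather than merely the right order — I expect this is the part of the write-up that will occupy the most space.
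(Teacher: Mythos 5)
Your proposal correctly identifies the two ingredients the paper uses (a dichotomy governed by a threshold of order $\sqrt{dn}$, and a pigeonhole/funnelling argument through a thin layer followed by convexity), but the case analysis you build around the layers of a single fixed diametral vertex $u_0$ does not reach the required order of magnitude, and both branches fail quantitatively. In your ``heavy layer'' branch the only deficient pairs you collect are the $\binom{\lvert L_i\rvert}{2}$ pairs inside one layer; with $\lvert L_i\rvert\approx t=\sqrt{dn}$ this is about $dn/2$ pairs, each deficient by at most $d$, for a total deficiency of $O_d(n)$ --- absorbed entirely by the error term, hence useless. (Also, the bound $d(v,w)\le 2\min(i,d-i)$ for $v,w\in L_i$ is false: going through $u_0$ gives only $d(v,w)\le 2i$, and there is no vertex guaranteed to be within $d-i$ of both, so a heavy layer with $i$ close to $d$ yields nothing at all from within-layer pairs.) In your ``thin layers'' branch, funnelling the roughly $n$ vertices of $L_d$ through a layer of size less than $t=\sqrt{dn}$ gives about $n^2/t=\sqrt{n/d}\,n$ pairs, each deficient by $O(d)$, for a total of $O(\sqrt d\, n^{3/2})$ --- a factor $d$ short of the needed $d^{3/2}n^{3/2}$. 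Your own closing estimate $\Theta(d^{3/2}\sqrt n)$ is moreover a factor $n$ short of the target $f_1(d)n^{3/2}$ that you correctly state at the outset, which signals that the balancing step has not actually been carried out.

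The paper repairs both defects by changing what the dichotomy quantifies over. Instead of the layers of one vertex, it considers $S=\left\{u: \left\lvert N_{\le \frac56 d}(u)\right\rvert\ge \sqrt{dn/3}\right\}$. If $S=V$, one sums over \emph{all} $n$ choices of $u$, obtaining at least $\frac12 n\sqrt{dn/3}$ pairs at distance at most $\frac56 d$, each deficient by at least $\frac d6$; this extra factor of $n$ (compared with your single heavy layer) is exactly what lifts this branch to order $d^{3/2}n^{3/2}$. If instead some $u\notin S$, then the whole annulus $N_{[\frac23 d,\frac56 d]}(u)$ has at most $\sqrt{dn/3}$ vertices spread over about $d/6$ layers, so by pigeonhole a \emph{single} layer $N_k(u)$ has only $O(\sqrt{n/d})$ vertices --- a factor $d$ thinner than your $t$ --- and funnelling the at least $n-\sqrt{dn/3}$ vertices of $N_{]\frac56 d,d]}(u)$ through it (each assigned vertex lies within $\frac13 d$ of its image, so two vertices assigned to the same image are deficient by at least $\frac13 d$) together with the QM--AM convexity estimate gives the matching $d^{3/2}n^{3/2}$ bound. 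Without this two-step pigeonhole (thin annulus, then thin layer inside it) and without summing the first case over all vertices, your outline cannot produce the claimed exponent, let alone the constant $\frac1{12\sqrt3}$.
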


\begin{proof}

Let $G=(V,E)$ be a graph of order $n$ and diameter $d$.
Look to the set \\$S=\left\{u \in V \colon \left\lvert N_{\le \frac 56 d}(u) \right\rvert \ge   \sqrt{\frac{1}{3}} \sqrt{dn} \right\}.$
If $S=V$, then there are at least $\frac{1}{2}n \sqrt{\frac{1}{3}} \sqrt{dn}$ pairs with $d(u,v) \le \frac 56 d$ and so 
$$W(G) \le d \binom{n}{2}-\frac{1}{2}n \sqrt{\frac{1}{3}}\sqrt{dn} \cdot \frac 16 d =\frac{d}{2}(n^2-n)-\frac{1}{12 \sqrt 3}d^{3/2}n^{3/2}.$$

In the other case we take $u \in V \backslash S$.
Then $b= \left \lvert N_{]\frac 56 d,d](u)} \right \rvert \ge n-\sqrt{\frac{1}{3}}\sqrt {dn}$, while $\left \lvert N_{[\frac 23 d,\frac 56 d]}(u) \right \rvert \le \sqrt{\frac{1}{3}} \sqrt{dn} $.
By the pigeonhole principle, there will be some $k \in [\frac 23 d,\frac 56 d]$ such that $a=\lvert N_k(u) \rvert \le \frac{ \sqrt{\frac{1}{3}} \sqrt{dn}}{ \frac16 d - \frac 13},$ since there are at least $\frac16 d - \frac 13$ integers in $[\frac 23 d,\frac 56 d]$.
Every element in $N_{[\frac 56d,d]}(u)$ is connected by a path of length at most $\frac 13 d$ to some vertex of $N_k(u)$.
Assign every element $v$ in $N_{[ \frac 56 d,d]}(u)$ to exactly one such element $w$ (with $d(v,w) \le \frac 13 d$) in $N_k(u)$.
For every vertex $w_i \in N_k(u)$ with $1 \le i \le a$, let $x_i$ be the number of elements in $N_{]\frac 56 d,d]}(u)$ that have been assigned to $w_i.$
By the triangle inequality, the distance between any two elements in $N_{]\frac 56 d,d]}(u)$ that are assigned to the same $w_i$ is at most $\frac 23  d$, instead of $d$ and so $d \binom{n}2$ overestimates the Wiener index by at least $\frac 13 d$ for this pair.
The number of such pairs is at least
\begin{align*}
\sum_{i=1}^a \binom{x_i}{2} &= \frac12 \left( \sum_{i=1}^a x_i^2 - \sum_{i=1}^a x_i \right)\\
&\ge \frac12 \left( a \left(\frac{b}{a}\right)^2-b \right)\\
&\ge \frac12 \left( n-\sqrt{\frac{1}{3}}\sqrt {dn} \right) \left(\dfrac{ n-\sqrt{\frac{1}{3}}\sqrt {dn}}{ \frac{ \sqrt{\frac{1}{3}} \sqrt {dn}}{ \frac16 d - \frac 13}  } -1 \right) \\
&= \left( \frac{1}{4\sqrt 3}\sqrt d-O\left(\frac1{\sqrt d}\right) \right) n^{3/2} - O_d(n)
\end{align*}
for large enough $n$. Here we used the inequality between the quadratic mean and the arithmetic mean (QM-AM).
Hence the Wiener index has been overestimated by at least \\$\left( \frac{1}{12\sqrt 3}d^{3/2}-O(\sqrt d) \right) n^{3/2} - O_d(n)$, from which the conclusion follows.
\end{proof}

\begin{thr}\label{thr2}
	There is a function $f_2$ for which $f_2(d)=\frac{3 \sqrt{2}}{8}d^{3/2} +O(\sqrt{d})$ as $d \to \infty$ for which the following hold.
	For $d \ge 3$ fixed, there exist a graph $G$ with order $n$ and diameter $d$ with Wiener index at least $\frac{d}{2}n^2 -f_2(d) n^{3/2} - O_d(n)$ as $n \to \infty.$
%
\end{thr}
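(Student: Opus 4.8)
The plan is to construct an explicit graph achieving the claimed lower bound, following the intuition sketched in the introduction: we want a graph of diameter $d$ with as many pairs of vertices at distance exactly $d$ as possible. The construction depends on the parity of $d$. Let $m$ and $t$ be parameters to be optimized with $mt = \Theta(n)$. When $d$ is even, say $d = 2r$, I would take $t$ disjoint paths of length $r$ sharing a common endpoint $x$ (a spider with $t$ legs of length $r$), and then blow up each of the $t$ leaves into an independent set (or clique) of size roughly $m$. Two leaves in different legs are then at distance $2r = d$, while leaves in the same blown-up set are at distance $2r$ as well provided they go through $x$ — so in fact essentially all pairs of the $\approx tm$ leaves are at distance $d$, except we must be slightly careful that within one blown-up leaf the distance is $2r$ and not less. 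When $d = 2r+1$ is odd, I would instead take $t$ spiders with legs of length $r$, identify nothing, but join their centers into a clique $K_t$ (or more precisely use a central clique as described in the introduction), so that a leaf in one spider and a leaf in another are at distance $r + 1 + r = d$; again blowing up the leaves gives $\Theta((tm)^2)$ pairs at distance $d$.

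The key computation is then to count the total distance of this construction and choose $t, m$ optimally. Writing $N = tm$ for the number of leaves (the bulk of the $n$ vertices, since the ``skeleton'' uses only $O(td) = O(\sqrt{n} \cdot d)$ vertices if we set $t = \Theta(\sqrt{n/d})$ and $m = \Theta(\sqrt{nd})$), the pairs that fail to be at distance $d$ are: (i) pairs involving a non-leaf vertex, of which there are $O(td \cdot n) = O_d(n^{3/2})$ — wait, that is too many, so one must check the deficit more carefully: each such pair has distance at least $d - O(d)$, contributing a deficit of $O(d)$ per pair, hence total deficit $O(td \cdot n \cdot d)$; and (ii) pairs of leaves within the same blown-up set — but as noted these are actually still at distance $d$ in the even case, and in the odd case one should route through the central clique so they are at distance $2r+1 = d$ too, or at worst $2r$, giving deficit $O(1)$ over $O(tm^2) = O(n^{3/2})$ pairs. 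So the dominant loss is from pairs involving one of the $O(t r)$ skeleton vertices, each losing at most $O(d)$, for a total of $O(trd \cdot n) = O(t d^2 n)$. Setting $t = \sqrt{n/d}$ makes this $O(d^{3/2} n^{3/2})$, which matches the order but I need the constant. More precisely, I would track the exact number of skeleton vertices ($t \cdot r + O(t)$), the exact deficit per pair (summing $d - d(\text{skeleton vertex}, \text{leaf})$, which averages something like $d/2$ over the leg structure), and optimize the resulting expression of the form $\frac{d}{2}n^2 - (\alpha t d^2/2 + \beta \cdot \text{within-block terms}) n + \ldots$ over $t$, subject to $tm \approx n$.

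I would carry out the steps as follows. First, fix the construction precisely for each parity, including exactly how many legs, leg lengths, and blow-up sizes, with $t$ left as a free parameter. Second, compute $W(G)$ exactly (or up to $O_d(n)$) as a function of $t$ and $n$: the leading term is $\frac{d}{2}\binom{n}{2}\cdot 2 = \frac{d}{2}(n^2 - n)$ minus a correction. The correction splits as a sum over (skeleton, leaf) pairs of $d - d(\cdot,\cdot)$ plus a sum over (skeleton, skeleton) pairs (which is $O_d(1)$) plus a sum over same-block leaf pairs. Third, bound or evaluate each piece: the skeleton–leaf deficit is the main term, roughly $\sum_{\text{skeleton } s} \sum_{\text{leaf } \ell}(d - d(s,\ell))$; since a leaf at the tip of a leg is at distance $\le d$ from everything and a skeleton vertex at ``depth'' $j$ along a leg sits at distance roughly $d - j$ from leaves in its own leg and $d - (\text{something})$ from others, this sum is computable in closed form. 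Fourth, optimize over $t$ to extract the constant $f_2(d) = \frac{3\sqrt 2}{8}d^{3/2} + O(\sqrt d)$.

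The main obstacle I anticipate is getting the constant exactly right — in particular correctly accounting for the within-block and skeleton–skeleton contributions and making sure the blow-ups are placed so that same-block pairs really are at distance $d$ (not $d-2$ or less), since otherwise one loses a $\Theta(n^{3/2})$-sized block of pairs each with deficit $2$, which would worsen the constant. The parity-odd case with the central clique needs the most care: one must verify that after blowing up, the diameter is still exactly $d$ (the clique does not create shortcuts that drop it below $d$, and no pair exceeds $d$), and that leaves in the same spider are connected via the clique at distance exactly $d$. Once the construction is pinned down, the remaining work is the optimization, which is routine calculus in $t$. I would also double-check that the skeleton indeed has $O(\sqrt{n d})$ vertices for the optimal $t$, so that the $O_d(n)$ error term genuinely absorbs all lower-order contributions and nothing of order $n^{3/2}$ is hiding there.
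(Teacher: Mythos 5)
Your construction is essentially the paper's (Figure~\ref{fig:graph}: $k$ brooms, i.e.\ legs of length $\ell-1$ each ending in a blown-up leaf, glued to a central vertex or central clique according to parity), and your treatment of the skeleton--leaf deficit is on the right track. But there is a genuine error in your accounting of the within-block pairs, and it is not a minor one: two vertices placed in the same blown-up leaf set necessarily share a common neighbour (the end vertex of that leg), so they are at distance $2$, not at distance $d$ ``via $x$'' or ``via the central clique''. A shortest path never detours through the centre when a two-step path exists. There is also no way to re-place the blow-ups to avoid this while keeping the skeleton small: any set of vertices substituted for a single leaf has a common neighbour. Consequently each such pair carries a deficit of $d-2$, not $O(1)$, and the number of such pairs is $\sum_i \binom{a_i}{2} \approx \frac{n^2}{2k} = \Theta\bigl(\sqrt{d}\, n^{3/2}\bigr)$ at the optimal $k=\lfloor\sqrt{n/\ell}\rfloor$, not $\Theta(n^{3/2})$ with deficit $2$ as you state. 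The total loss from these pairs is therefore $\frac{d}{2}\cdot\frac{n^2}{k}(1+o(1)) = \frac{\sqrt{2}}{4}d^{3/2}n^{3/2}(1+o(1)) = \frac{2\sqrt 2}{8}d^{3/2}n^{3/2}(1+o(1))$, which is two thirds of the entire constant $\frac{3\sqrt 2}{8}$ you are trying to establish. You have the hierarchy backwards: the within-block loss dominates the (net) skeleton loss, which contributes only $\frac{\sqrt 2}{8}d^{3/2}n^{3/2}$ after the positive leaf-to-skeleton distances are credited back.

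If you carried out your plan as written you would either stall at the (impossible) step of arranging same-block pairs to be at distance $d$, or compute a value of $f_2$ that is too small by $\frac{2\sqrt 2}{8}d^{3/2}$ and is false for this construction. The fix is what the paper does: do not try to count within-block pairs as distance-$d$ pairs at all. Write $W(G) \ge \sum_{i\ne j} a_i a_j\, d + (\text{leaf-to-skeleton terms})$, so that only leaves in \emph{different} branches are counted at distance $d$; the identity $\sum_{i \ne j} a_i a_j = \bigl(\sum_i a_i\bigr)^2 - \sum_i a_i^2$ then produces the factor $\bigl(1-\frac 1k\bigr)$ that encodes exactly this loss, and optimizing $k$ balances it against the skeleton terms to yield $\frac{3\sqrt 2}{8}$. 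The rest of your outline (leg length, parity handling via the central clique, skeleton size $O(\sqrt{nd})$, diameter verification) is consistent with the paper's proof.
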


\begin{proof}
Take $n$ and $\ell$, with $d=2\ell$ or $d=2\ell+1$.
We take $k = \lfloor \sqrt{ \frac n{\ell} } \rfloor.$ 
We construct a graph as in Figure~\ref{fig:graph}.
When $d$ is even, $C$ is a single vertex connected to $k$ branches.
When $d$ is odd, we take $C$ equal to the graph $K_k$ and every vertex of that $K_k$ being connected to exactly one of $k$ branches.
Every branch is a broom (a concatenation of a path and a star), with the number of leaves equal to $a_i$.
We take every $a_i$ being at least $\lfloor \frac{n-k(\ell-1)-\lvert C \rvert }{k} \rfloor \ge \frac{n}{k}-\ell-1$, with the condition that $\sum_{i=1}^k a_i +k(\ell-1)+\lvert C \rvert=n.$
Then \belowdisplayskip=-12pt
\begin{align*}
W(G) &\ge \sum_{i \not= j} a_i a_j d + (k-1)\frac32 \ell(\ell-1)\left( \sum_{i=1}^k a_i \right)\\
&\ge d \binom{k}{2} \left(\frac{n}{k}-\ell-1 \right)^2 + \frac 32 (k-1)(\ell-1)\ell \left(n-k(\ell-1)-\lvert C \rvert \right)\\
&= \frac d2 n^2 \left(1-\frac 1k\right) -dk\ell n+\frac32 k\ell^2n - O_d(n)-O(\sqrt d)n^{3/2}\\
&=\frac d2 n^2 -\frac{\sqrt 2}{4}d^{3/2}n^{3/2} -\frac{1}{\sqrt 2}d^{3/2}n^{3/2}+\frac 32 \frac{1}{2 \sqrt 2}d^{3/2}n^{3/2}- O_d(n)-O(\sqrt d)n^{3/2}\\
&=\frac{d}{2}n^2 - \left( \frac{3 \sqrt{2}}{8}d^{3/2} +O(\sqrt{d})\right) n^{3/2} - O_d(n).
\end{align*}  \end{proof}

\begin{figure}[h]
\centering
\begin{tikzpicture}\centering [thick,scale=0.4]%
\draw \foreach \x in {0,45,90,180} {
	(\x:0)--  (\x:0.5)  node {}
	(\x:0.5)--  (\x:1)  node {}
};
\draw (0:0) node{};
\draw[dotted] \foreach \x in {0,45,90,180} {
	(\x:0.55)--(\x:3.75)  
};

\draw \foreach \a in {5,-5} \foreach \x in {0,45,90,180} {
	(\x:3.75)--(\x+\a:4.25) node {}
};
\draw \foreach \x in {0,45,90,180} {
	(\x:3.75)  node {}
	
};

\draw[dotted]  \foreach \x in {0,45,90,180} {
	(\x-5:4.25)--(\x+5:4.25) 
};

\draw [fill=ududff] (0,0) circle (2.5pt);

\coordinate [label=center:$C$] (A) at (0,-0.35); 

\coordinate [label=left:$l-1$] (A) at (3,0.3); 
\coordinate [label=center:$l-1$] (A) at (0.5,2.25); 
\coordinate [label=center:$l-1$] (A) at (-2.25,0.25); 
\coordinate [label=right:$l-1$] (A) at (40:2.25); 

\coordinate [label=center:$a_1$] (A) at (0:4.5); 
\coordinate [label=center:$a_2$] (A) at (45:4.5); 
\coordinate [label=center:$a_3$] (A) at (90:4.5); 
\coordinate [label=center:$a_k$] (A) at (180:4.5);

\draw[dotted]  (0.75,0) arc (0:180:0.75) ;
\coordinate [label=center:$k$] (A) at (135:0.95); 

\end{tikzpicture}

\caption{Graph obtaining upper bound}
\label{fig:graph}
\end{figure}
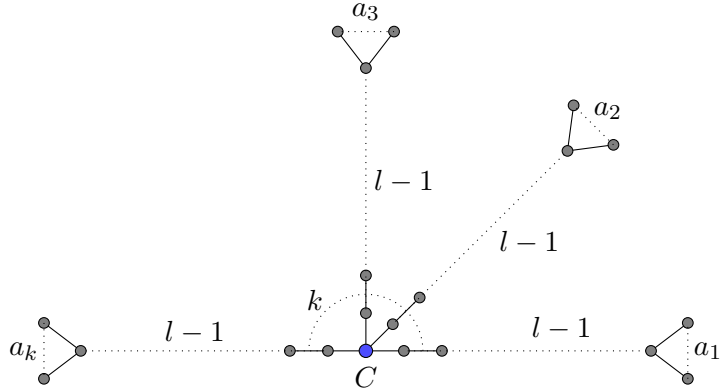

We determine the gap to the trivial upper bound $\frac d2 n^2$ to within a factor $5$ for large $d$ and $n$. The improvement on the lower bound is done by looking to the size of neighbourhoods $N_m(u)$ in a more continuous way than was done in Theorem~\ref{thr1}. The improved upper bound follows from a modification of the construction in Figure~\ref{fig:graph}.
For trees, we can further sharpen this, see Theorem~\ref{goeieConstantTrees} below.

\begin{thr}\label{lelijk}
Writing $W_{n,d}$ for the maximum Wiener index of a graph with order $n$ and diameter $d$, 
$$\liminf_{d \to \infty} \liminf_{n \to \infty} \dfrac{\frac d2 n^2 - W_{n,d}}{d^{3/2} n^{3/2} } \ge \frac{1}{2 \sqrt{24}}>0.1 \mbox{ and}$$
$$\limsup_{d \to \infty} \limsup_{n \to \infty} \dfrac{\frac d2 n^2 - W_{n,d}}{d^{3/2} n^{3/2} } < \frac{29}{60}$$ 
\end{thr}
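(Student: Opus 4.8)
The plan is to sharpen both bounds in Theorem~\ref{mainres} by quantifying more carefully the trade-offs already present in the proofs of Theorem~\ref{thr1} and Theorem~\ref{thr2}, replacing the ad hoc threshold $\frac56 d$ and the single broom-length $\ell-1$ by optimized parameters.

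\paragraph{The lower bound on the gap (upper bound on $W_{n,d}$).}
First I would revisit the argument of Theorem~\ref{thr1}, but instead of fixing the cutoff at $\frac56 d$ I would introduce a real parameter $\alpha \in (\tfrac12,1)$ and a ``thick'' threshold $t\sqrt{dn}$, defining $S=\{u : |N_{\le \alpha d}(u)| \ge t\sqrt{dn}\}$. If $S=V$, then every vertex has $\ge t\sqrt{dn}$ vertices within distance $\alpha d$, so at least $\tfrac12 n\, t\sqrt{dn}$ pairs have their distance overestimated by $(1-\alpha)d$ in the trivial bound $\frac d2 n^2$; this gives a gap of order $\tfrac12 t(1-\alpha)\, d^{3/2} n^{3/2}$. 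If $S\ne V$, pick $u\notin S$: then $|N_{[\beta d,\alpha d]}(u)| \le t\sqrt{dn}$ for the subinterval, so by pigeonhole some layer $N_k(u)$, $k\in[\beta d,\alpha d]$, has $a\le \frac{t\sqrt{dn}}{(\alpha-\beta)d}$ vertices, while $b=|N_{(\alpha d,d]}(u)| \ge n-t\sqrt{dn}$ vertices each lie within $(1-\beta)d$ of this layer. Assigning each such vertex to a nearest element of $N_k(u)$ and applying QM--AM, $\sum_i\binom{x_i}{2}\ge \tfrac12(b^2/a-b)$, and each of these pairs has its distance cut from $d$ to at most $2(1-\beta)d$, i.e. overestimated by $(2\beta-1)d$. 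This yields a gap of order $\tfrac12\cdot\frac{(\alpha-\beta)d}{t\sqrt{dn}}\cdot n^2 \cdot (2\beta-1)d = \tfrac12(\alpha-\beta)(2\beta-1)\,\frac{d^{3/2}n^{3/2}}{2t}$ — wait, I need to recompute: $b^2/a \approx n^2(\alpha-\beta)d/(t\sqrt{dn}) = (\alpha-\beta)\sqrt{d}\,n^{3/2}/t$, so the gap is $\tfrac12\cdot\frac{(\alpha-\beta)}{t}\,d^{1/2}n^{3/2}\cdot(2\beta-1)d$. We then take the worse of the two cases, i.e. the minimum over the two expressions, and choose $\alpha,\beta,t$ to maximize that minimum; balancing $t(1-\alpha)$ against $\frac{(\alpha-\beta)(2\beta-1)}{t}$ and optimizing (one expects $\beta\approx\frac{2+\alpha}{3}$ from the interior optimization in $\beta$, then a further optimization in $\alpha$ and a square-root balance in $t$) should push the constant from $\frac1{12\sqrt3}\approx 0.048$ up to at least $\frac1{2\sqrt{24}}\approx 0.102$. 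The delicate point is that ``distance at most $2(1-\beta)d$'' requires $2(1-\beta)d<d$, i.e. $\beta>\frac12$, and that the $O_d(n)$ error terms (from $-b$ in QM--AM, from $\lvert C\rvert$, and from floors) genuinely stay lower order, which is why the bound is stated as a $\liminf$.

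\paragraph{The upper bound on the gap (lower bound on $W_{n,d}$).}
Here I would modify the construction of Figure~\ref{fig:graph}. The loss of $\frac{3\sqrt2}{8}d^{3/2}n^{3/2}$ in Theorem~\ref{thr2} comes from three sources: the $(1-\tfrac1k)$ factor (each branch loses its internal pairs), the $dk\ell n$ term (pairs where one endpoint is a non-leaf of a broom), and a partial recovery $+\tfrac32 k\ell^2 n$. To do better one should shorten the broom handles: make the path part of each branch have length $r$ rather than $\ell-1$, and compensate by attaching a small ``spread'' subtree near the center so that leaves in different branches are still at distance $d$ (for even $d$, one can route through a depth-$(\ell-r)$ subtree at the center; for odd $d$, combine with the central clique as before). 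Since $k=\lfloor\sqrt{n/\ell}\rfloor\approx\sqrt{2n/d}$, the dominant $d^{3/2}n^{3/2}$ coefficient becomes a function of the ratio $r/\ell$; one writes $W(G)\ge \frac d2 n^2 - g(r/\ell)\,d^{3/2}n^{3/2}-O_d(n)$ and minimizes $g$ over the feasible range of $r/\ell$, the feasibility constraint being that the spreading subtree fits within the remaining radius and does not itself create too many short pairs. Checking that the optimized $g$ is strictly below $\frac{29}{60}\approx 0.483$ (while $\frac{3\sqrt2}{8}\approx 0.530$) is a finite computation; the honest obstacle is bookkeeping — correctly counting which ordered pairs fail to be at distance $d$ in the new construction, and confirming the recovery terms, so that nothing of order $d^{3/2}n^{3/2}$ is lost unaccounted for. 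Because the construction is parametrized by $\ell=\lfloor d/2\rfloor$ and all the lower-order corrections blow up with $d$, the result is again only a statement about $\limsup_{d\to\infty}$.

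\paragraph{Main obstacle.}
I expect the real difficulty to be on the upper-bound (construction) side: designing the central gadget that keeps all inter-branch leaf distances equal to $d$ while the handles are short, and then verifying that this gadget contributes only $O_d(n)$ vertices and only $O_d(n)$ or at worst a controllably small $d^{3/2}n^{3/2}$ correction to the Wiener index. On the lower-bound side the scheme is a routine two-case pigeonhole optimization; the only care needed is to keep the three free parameters $\alpha,\beta,t$ consistent (in particular $\frac12<\beta\le\alpha<1$) and to confirm the claimed numerical value $\frac1{2\sqrt{24}}$ comes out of the min-max. Assembling the two halves then gives Theorem~\ref{lelijk} directly.
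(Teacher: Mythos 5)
Both halves of your plan have genuine gaps; neither reaches the stated constant. On the upper-bound-for-$W_{n,d}$ side, your three-parameter scheme (one cutoff $\alpha d$, one pigeonholed layer in $[\beta d,\alpha d]$, one thickness $t\sqrt{dn}$) cannot get past $\frac{1}{12\sqrt 3}$: the two case-bounds are $\frac{t(1-\alpha)}{2}$ and $\frac{(\alpha-\beta)(2\beta-1)}{2t}$, their product is independent of $t$, so the balanced value is $\frac12\sqrt{(1-\alpha)(\alpha-\beta)(2\beta-1)}$, and writing $x=1-\alpha$, $y=\alpha-\beta$, $z=\beta-\frac12$ with $x+y+z=\frac12$ the maximum of $2xyz$ is $\frac1{108}$, attained at $x=y=z=\frac16$ --- i.e.\ exactly the choice $\alpha=\frac56$, $\beta=\frac23$, $t=\frac1{\sqrt3}$ already made in Theorem~\ref{thr1}. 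So your ``optimization'' reproduces $\frac1{12\sqrt3}\approx0.048$, not $\frac{1}{2\sqrt{24}}\approx0.102$. The paper's actual argument is structurally different: it first passes to the set $U$ of vertices $u$ with $\lvert N_d(u)\rvert$ close to $n$, then imposes a \emph{layer-dependent} threshold $\frac{2m-d}{d}\sqrt{24n/d}$ simultaneously for every $m\in[\frac d2,d)$; if all layers are thick it sums the deficits $(d-m)\lvert N_m(u)\rvert$ over \emph{all} layers (a Riemann sum of $\int_{d/2}^d(d-x)(2x-d)\,dx=\frac{d^3}{24}$), and if some layer is thin it runs the assignment/QM--AM argument at that layer with the matching threshold. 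Accumulating the deficit across all layers, rather than counting only pairs below a single cutoff, is the missing idea that produces the factor-$\frac{3}{\sqrt2}$ improvement.

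On the construction side your plan is unworkable as described: for even $d=2\ell$ the center is a single cut vertex, so the distance between leaves in different branches is the sum of their depths, and no ``spread subtree at the center'' can increase shortest-path distances; shortening the handles to length $r<\ell-1$ forces inter-branch leaf distances down to $2(r+1)<d$ and destroys the main term. The paper instead keeps every branch at full height $\ell$ but replaces the broom by a tree that splits in two at heights $\ell-\lfloor c^s\ell\rfloor$ for $s=1,\dots,t$ (Figure~\ref{fig:subtrees}); this simultaneously reduces the loss from leaf/internal-vertex pairs and from same-branch leaf pairs, and optimizing the splitting ratio $c$ (the paper takes $c=0.3825$) brings the coefficient just below $\frac{29}{60}$. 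Finally, even granting a workable gadget, ``checking that the optimized $g$ is strictly below $\frac{29}{60}$ is a finite computation'' is not a proof --- the specific constant in the statement has to come out of an explicit construction and an explicit count.
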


\begin{proof}

Fix any constant $0<c< \frac{1}{2 \sqrt{24}}.$

Take any $d$ and $\epsilon=\frac 1d$ such that $ c<  (1- \epsilon) \frac{1}{2 \sqrt{24}} \left(1-\frac {6}{d}-\frac {6}{d^2} \right)$.
For every $n>d$, we will prove that $W_{n,d} < \frac d2 n^2 -c d^{3/2}n^{3/2} + O_d(n)$ from which the result follows.

Take an optimal graph $G$ with Wiener index $W_{n,d}$.

If $\lvert N_{d}(u) \rvert <n- 2\frac{c}{\epsilon} d^{3/2}\sqrt n $ for at least $\epsilon n$ vertices  $u \in V$, the result follows.

Let $U$ be the set of vertices $u$ satisfying $\lvert N_{d}(u) \rvert \ge n- 2\frac{c}{\epsilon} d^{3/2}\sqrt n$ and assume $\lvert U \rvert > (1-\epsilon) n.$

If $\lvert N_{m}(u) \rvert > \frac{(2m-d)}{d}\sqrt{ \frac{24 n}{d} }$ for all $u \in U$ and for every $\frac d2 \le m<d$, then the Wiener index is at most

\begin{align*}
d \binom{n}{2}- \frac{1}{2} (1-\epsilon) n \sum_{ d/2 \le m \le d-1} (d-m)\frac{(2m-d)}{d}\sqrt{ \frac{24 n}{d} }.
\end{align*}

By some approximation theory for right Riemann sums, we know
\begin{align*}
\left \lvert \int_{d/2}^d (d-x)(2x-d) dx - \sum_{ d/2 \le m \le d-1} (d-m)(2m-d)  \right \rvert
\le \frac d4  + \frac{d^2}4
\end{align*}
and thus 
\begin{align*}
W(G) &\le d \binom{n}{2}-\sqrt{6}(1-\epsilon)\frac{n^{3/2}}{d^{3/2}}\left( \frac{d^3}{24}-\frac{d^2}{4}-\frac{d}{4} \right)
\\&\le d \binom{n}{2} -cd^{3/2}n^{3/2}
\end{align*}
from which the result follows in this case.

In the remaining case, there is some $d/2 \le m \le d-1 $ and $u \in U$ such that $a=\lvert N_{m}(u) \rvert \le \frac{(2m-d)}{d}\sqrt{ \frac{24 n}{d} }$, while $b=\lvert N_d(u) \rvert \ge n- 2\frac{c}{\epsilon} d^{3/2}\sqrt n.$
Assign every vertex $v$ in $N_d(u)$ to one vertex $w \in N_{m}(u)$ for which $d(v,w)=d-m.$
Similarly as before in the proof of Theorem~\ref{thr1}, there are at least $$\frac 12 b \left ( \frac b a - 1 \right) = \frac{d^{3/2} n^{3/2}}{2 \sqrt{24}(2m-d)} +O_d(n)$$
pairs of vertices in $N_d(u)$ that are assigned to the same vertex in $N_{m}(u)$.
For those pairs the Wiener index has been overestimated by at least $d-2(d-m)=2m-d$ and thus $$W(G) \le d \binom{n}{2} - \frac{d^{3/2} n^{3/2}}{2 \sqrt{24}} +O_d(n).$$

For the second inequality, we modify the branches in the construction of Figure~\ref{fig:graph} to get an improvement of Theorem~\ref{thr2}.
We take $k=\lfloor \sqrt {\frac n{\ell} } \rfloor=\sqrt {\frac n{\ell} }-c_1$ for some $0\le c_1<1$ again into branches of the form as shown in Figure~\ref{fig:subtrees}, where the height of the rooted tree with respect to $v$ is $\ell$. Every branch splits in two at every height $\ell -\lfloor c^s \ell \rfloor $ for $1 \le s \le  t = \lfloor -\log_c(\ell) \rfloor  $ and $c < 0.5$ some fixed constant we determine later.
At height $\ell-1$, every branch splits again, this in such a way that the degrees of two vertices at height $\ell-1$ differ by at most $1.$
The number of vertices up to height $\ell-1$ from $v$, will be equal to
$$\ell(1+c+2c^2+4c^3+\ldots+2^{t-1}c^{t})+O(2^t)= \frac {\ell}2 \left( 1+ \frac1{1-2c} \right)+o(\ell).$$
At the end of the branch we have $A=\frac nk - \frac {\ell}2 \left( 1+ \frac1{1-2c} \right)+o(\ell)= \sqrt{n \ell }+\left( c_1- \frac {1}2 \left( 1+ \frac1{1-2c} \right) \right)\ell+  o(\ell)$ leaves.

The sum of distances between leaves of different branches equals
\begin{align*}
d \binom{k}{2} A^2.
\end{align*}

The sum of distances from one fixed leaf to all non-leafs from another fixed branch, is up to some $o(\ell^2)$-term equal to
\begin{align*}
2\ell \frac {\ell}2 \left(1+ \frac{1}{1-2c}\right) -  \ell\frac {\ell}2 - \sum_{m=1}^t 2^{m-1} c^m\ell \frac {c^m\ell}{2} \\
= \left(\frac 34 + \frac{1}{1-2c} -\frac14 \frac1{1-2c^2} \right) \ell^2 .
\end{align*}

The total sum of distances between leaves of one branch and non-leaf elements in other branches, is up to some $o\left(d^{3/2}n^{3/2}\right)$-terms equal to
$$k(k-1) A \left(\frac 34 + \frac{1}{1-2c} -\frac14 \frac1{1-2c^2} \right) \ell^2.$$
Furthermore, the sum of distances between leaves of the same branch, is 
\begin{align*}
\sum_{m=1}^{t} \left(2c^m \ell+O(1) \right) 2^{m-1}  \left(\frac A{2^m}+O(1)\right)^2 \\
=\frac c2 \frac{1}{1-\frac c2} \ell A^2 + o(\ell A^2)+O_{\ell}(A)
\end{align*}
and there are $k$ such branches.

Summing over the previous three expressions, we see that the Wiener index is at least 
\begin{align*}
\frac d2 \left (\frac n{\ell} -(2c_1+1)\sqrt{ \frac n{\ell}} \right )\left( \ell n +\left(2c_1-1-\frac{1}{1-2c}\right) \ell^{3/2} \sqrt n \right) \\+ \frac n{\ell} \sqrt{n\ell}\left(\frac 34 + \frac{1}{1-2c} -\frac14 \frac1{1-2c^2} \right) \ell^2
+\sqrt {\frac n{\ell}} \frac c2 \frac{1}{1-\frac c2} \ell n\ell+o(d^{3/2}n^{3/2})\\
= \frac d2 n^2 + \left( \frac{-5}{4}-\frac14 \frac1{1-2c^2}+\frac{c}{2-c}   \right) n^{3/2}\frac{d^{3/2}}{2^{3/2}}+o(d^{3/2}n^{3/2}) 
\end{align*}

Choosing $c=0.3825$ does the job.
Note that we can find the optimal value for $c$ as the solution of a fifth degree polynomial, by taking $f'(c)=0$ for $f(c)=-\frac14 \frac1{1-2c^2}+\frac{c}{2-c} .$
\end{proof}

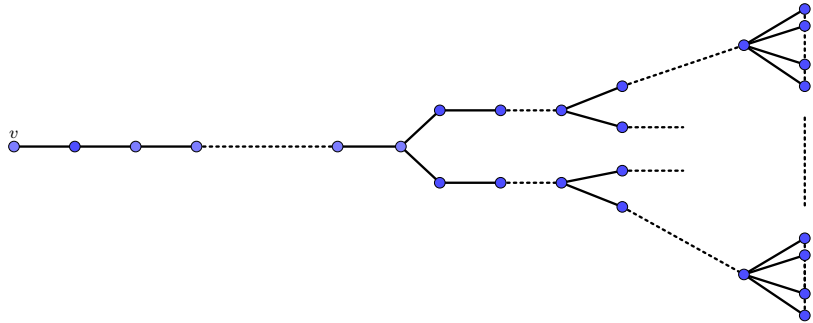
\begin{figure}[h]
	\centering

	\scalebox{0.8}{
		\begin{tikzpicture}[line cap=round,line join=round,>=triangle 45,x=1.0cm,y=1.0cm]
		\clip(1.8,-2.95) rectangle (16,2.4);
		\draw [line width=1.1pt,dotted] (5.,0.)-- (7.32,0.);
		\draw [line width=1.1pt] (2.,0.)-- (3.,0.);
		\draw [line width=1.1pt] (3.,0.)-- (4.,0.);
		\draw [line width=1.1pt] (4.,0.)-- (5.,0.);
		\draw [line width=1.1pt] (7.32,0.)-- (8.36,0.);
		\draw [line width=1.1pt,dotted] (15.0,2.28)-- (15.0,1.36);
		\draw [line width=1.1pt, dotted] (15.,1.)-- (15.0,1.36);
		\draw [line width=1.1pt] (14.0,1.68)-- (15.0,2.28);
		\draw [line width=1.1pt] (14.0,1.68)-- (15.,2.);
		\draw [line width=1.1pt] (14.0,1.68)-- (15.0,1.36);
		\draw [line width=1.1pt] (14.0,1.68)-- (15.,1.);
		\draw [line width=1.1pt] (9.,0.6)-- (10.0,0.6);
		\draw [line width=1.1pt] (9.0,-0.6)-- (10.0,-0.6);
		\draw [line width=1.1pt] (11.0,0.6)-- (12.,1.);
		\draw [line width=1.1pt] (11.0,0.6)-- (12.,0.32);
		\draw [line width=1.1pt] (11.0,-0.6)-- (12.0,-0.4);
		\draw [line width=1.1pt] (11.0,-0.6)-- (12.,-1.);
		\draw [line width=1.1pt,dotted] (12.,1.)-- (14.0,1.68);
		\draw [line width=1.1pt,dotted] (10.,0.6)-- (11.0,0.6);
		\draw [line width=1.1pt] (9.,0.6)-- (8.36,0.);
		\draw [line width=1.1pt] (8.36,0.)-- (9.,-0.6);
		\draw [line width=1.1pt,dotted] (10.0,-0.6)-- (11.0,-0.6);
		\draw [line width=1.1pt,dotted] (12.,0.32)-- (13.,0.32);
		\draw [line width=1.1pt,dotted] (12.0,-0.4)-- (13.0,-0.4);
		\draw [line width=1.1pt,dotted] (15.0,-1.52)-- (15.,-2.44);
		\draw [line width=1.1pt,dotted ] (15.0,-2.8)-- (15.0,-2.44);
		\draw [line width=1.1pt] (14.0,-2.12)-- (15.0,-1.52);
		\draw [line width=1.1pt] (14.0,-2.12)-- (15.0,-1.8);
		\draw [line width=1.1pt] (14.0,-2.12)-- (15.0,-2.44);
		\draw [line width=1.1pt] (14.0,-2.12)-- (15.0,-2.8);
		\draw [line width=1.1pt,dotted] (15.0,-2.44)-- (15.0,-1.8);
		\draw [line width=1.1pt,dotted] (12.,-1.)-- (14.0,-2.12);
		\draw [line width=1.1pt,dotted] (15.0,0.48)-- (15.,-1.);
		\begin{scriptsize}
		\draw [fill=xdxdff] (2.,0.) circle (2.5pt);
		\draw [fill=ududff] (3.,0.) circle (2.5pt);
		\draw [fill=xdxdff] (4.,0.) circle (2.5pt);
		\draw [fill=xdxdff] (5.,0.) circle (2.5pt);
		\draw [fill=xdxdff] (7.32,0.) circle (2.5pt);
		\draw [fill=xdxdff] (8.36,0.) circle (2.5pt);
		\draw [fill=ududff] (9.,0.6) circle (2.5pt);
		\draw [fill=ududff] (9.0,-0.6) circle (2.5pt);
		\draw [fill=ududff] (10.0,-0.6) circle (2.5pt);
		\draw [fill=ududff] (11.0,-0.6) circle (2.5pt);
		\draw [fill=ududff] (11.0,0.6) circle (2.5pt);
		\draw [fill=ududff] (10.0,0.6) circle (2.5pt);
		\draw [fill=ududff] (12.,1.) circle (2.5pt);
		\draw [fill=ududff] (12.,0.32) circle (2.5pt);
		\draw [fill=ududff] (12.0,-0.4) circle (2.5pt);
		\draw [fill=ududff] (12.,-1.) circle (2.5pt);
		\draw [fill=ududff] (14.0,1.68) circle (2.5pt);
		\draw [fill=ududff] (15.0,2.28) circle (2.5pt);
		\draw [fill=ududff] (15.,2.) circle (2.5pt);
		\draw [fill=ududff] (15.0,1.36) circle (2.5pt);
		\draw [fill=ududff] (15.,1.) circle (2.5pt);
		\draw [fill=ududff] (14.0,-2.12) circle (2.5pt);
		\draw [fill=ududff] (15.0,-1.52) circle (2.5pt);
		\draw [fill=ududff] (15.0,-1.8) circle (2.5pt);
		\draw [fill=ududff] (15.,-2.44) circle (2.5pt);
		\draw [fill=ududff] (15,-2.8) circle (2.5pt);
		\coordinate [label=center:$v$] (A) at (2,0.2); 
		\end{scriptsize}
		\end{tikzpicture}}
	\caption{Good approximation form optimal subtrees}
	\label{fig:subtrees}
\end{figure}

\section{A sharper bound for trees}\label{sec3}

\begin{lem}
A tree with maximum Wiener index among all trees of order $n$ and diameter $d$ will not contain any leaf that has distance to a central vertex strictly smaller than $\frac{d-1}{2}$.
\end{lem}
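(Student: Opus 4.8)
The plan is a local-exchange argument by contradiction, with a secondary monovariant to settle the equality case. For a tree $F$ write $\tau_F(x)=\sum_{v\in V(F)}d_F(x,v)$ and $\mathrm{ecc}_F(x)=\max_{v\in V(F)}d_F(x,v)$, and set $r=\lceil d/2\rceil$. Suppose the statement fails. Among all trees of order $n$ and diameter $d$ of maximum Wiener index, choose $T$ so as to maximise $\tau_T(c)$, where $c$ is a central vertex of $T$ (for odd $d$, take the one of the two central vertices for which $\tau_T$ is larger); thus $\mathrm{ecc}_T(c)=r$. By hypothesis $T$ has a leaf $\ell$ with $d_T(\ell,c)<\frac{d-1}{2}$; let $p$ be its unique neighbour, so $d_T(p,c)=d_T(\ell,c)-1\le r-2$. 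Since $\ell$ is a leaf at distance less than $\frac{d-1}{2}$ from $c$, no diametral path of $T$ runs through it, so $H:=T-\ell$ is a subtree (hence it preserves the distances between its vertices), still has diameter $d$, and still has $c$ as a central vertex with $\mathrm{ecc}_H(c)=r$.

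The transformation is to relocate $\ell$. Pick a leaf $g$ of $H$ with $d_H(c,g)=r$, let $w$ be its neighbour, so $d_H(c,w)=r-1$, and note $\mathrm{ecc}_H(w)\le d-1$: for even $d$ this is the crude bound $\mathrm{ecc}_H(w)\le d_H(w,c)+\mathrm{ecc}_H(c)=2r-1=d-1$, while for odd $d$ one uses that a deepest branch at a central vertex necessarily lies on the side of the \emph{other} central vertex, which improves the bound by one. Among all $w$ produced this way let $q$ be one with $\tau_H(q)$ maximal, and put $T'=H+(\text{a pendant leaf at }q)$. Then $T'$ has order $n$; since $\mathrm{ecc}_{T'}(\ell)=1+\mathrm{ecc}_H(q)\le d$ and $T'$ still contains a diametral path of $H$, it has diameter $d$; and $\mathrm{ecc}_{T'}(c)=\max(r,1+(r-1))=r$, so $c$ remains a central vertex of $T'$. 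Counting the distances from the relocated leaf gives $W(T')-W(T)=\tau_H(q)-\tau_H(p)$. If $\tau_H(q)>\tau_H(p)$, this contradicts the maximality of $W(T)$. If $\tau_H(q)=\tau_H(p)$, then $T'$ is again extremal, but $d_{T'}(\ell,c)=1+d_H(q,c)=r>d_T(\ell,c)$ while every other distance to $c$ is unchanged, so $\tau_{T'}(c)>\tau_T(c)$, contradicting the choice of $T$. Thus the lemma follows, \emph{provided} that $\tau_H(q)\ge\tau_H(p)$.

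That inequality is the heart of the matter and the step I expect to be the main obstacle: one must show that the transmission-heaviest vertex at distance $r-1$ from $c$ on a longest branch is at least as transmission-heavy as an arbitrary vertex $p$ lying within distance $r-2$ of $c$. The route I would take is to telescope both quantities against $\tau_H(c)$: if $x$ is at distance $s$ from $c$ via $c=x_0,x_1,\dots,x_s=x$, then $\tau_H(x)-\tau_H(c)=\sum_{i=1}^{s}\bigl(|V(H)|-2m_i\bigr)$, where $m_i$ is the number of vertices whose shortest path to $c$ passes through $x_i$. One then compares the sum along the $c$--$q$ path (length $r-1$) with that along the $c$--$p$ path (length $\le r-2$), using that along a longest branch the sub-branch sizes $m_i$ stay small, and splitting according to whether $p$ does or does not lie in the branch at $c$ that contains $q$; the maximisation defining $q$ (i.e.\ the freedom in choosing $g$ among all deepest leaves) is what keeps the bound valid even when $H$ has a lot of weight near $c$. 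The only other mild annoyance is the two-central-vertex bookkeeping for odd $d$; everything else is routine.
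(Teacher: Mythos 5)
Your reduction is set up correctly: deleting $\ell$ from $T$ and re-attaching it as a pendant at $q$ changes the Wiener index by exactly $\tau_H(q)-\tau_H(p)$ (writing $\tau_H(x)$ for the sum of distances from $x$ in $H=T-\ell$), and the eccentricity bookkeeping, including the odd-$d$ case, is fine. But the inequality you yourself flag as the heart of the matter is not merely unproven --- it is false. Take $d=2r$ with $r\ge 3$ and build $T$ from a central vertex $c$ with three branches: a path $c,a_1,\dots,a_r$ with $N$ extra leaves attached at $a_{r-1}$; a symmetric path $c,b_1,\dots,b_r$ with $N$ extra leaves attached at $b_{r-1}$; and a path $c,c_1,\dots,c_{r-1}=\ell$. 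Then $\ell$ is a leaf with $d(\ell,c)=r-1<\tfrac{d-1}{2}$ and $p=c_{r-2}$. The leaves of $H$ at distance $r$ from $c$ are $a_r$, $b_r$ and the $2N$ attached leaves, so your $q$ is forced to be $a_{r-1}$ or $b_{r-1}$, and for large $N$ one has $\tau_H(a_{r-1})=2rN+O(r^2)$ (the two mass clusters sit at distances $1$ and $2r-1$ from it) while $\tau_H(p)=(4r-4)N+O(r^2)$ (both clusters sit at distance $2r-2$ from $p$). Since $4r-4>2r$, your move strictly \emph{decreases} the Wiener index. Worse, for $r\ge4$ the vertex $p$ maximises the distance to the mass over the whole tree, so no admissible re-attachment point for the single leaf $\ell$ beats putting it back where it was: relocating one leaf is fundamentally too weak an operation on such configurations. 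Your telescoping heuristic breaks exactly here, because a longest branch can carry almost all of the mass in a sub-branch hanging at depth $r-1$, so the quantities $m_i$ along the $c$--$q$ path are not small; and since your sketched route to the inequality nowhere uses extremality of $T$, the gap cannot be closed along these lines.

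For comparison, the paper's proof avoids any transmission comparison. It keeps the pendant path ending at $\ell$ attached as a unit: letting $v$ be the vertex of degree at least $3$ closest to $\ell$, $b$ the neighbour of $v$ towards $\ell$, and $w$ the neighbour of $v$ in the \emph{smallest} component $C$ of $T\setminus v$ not containing $\ell$, it replaces the edge $bv$ by $bw$. Distances from the path to $C$ drop by $1$ while distances to everything else outside the path (which, by the choice of $C$ and the presence of $v$ itself, is strictly more numerous) rise by $1$, giving a guaranteed strict gain with only a local count and the same diameter check you performed. In the example above this move (with $v=c$) indeed strictly increases the Wiener index even though your move decreases it.
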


\begin{proof}
Assume by the contrary that some extremal tree $T$ does. Let $c$ be a central vertex and $\ell$ a leaf for which $d(c, \ell) < \frac{d-1}{2}.$ Let $v$ be the vertex closest from $\ell$ that has degree at least $3$. This vertex does exist since the extremal tree is not a path. Let $x=d(\ell,v)$ be the length of the path from $v$ to $\ell.$ Let $b$ be the neighbour of $v$ which is on this path as well (possible $b=\ell$).
Note that $T \backslash v$ has at least three components by definition.
Let $C$ be the smallest component not containing $\ell.$
Let $w$ be the neighbour of $v$ which is a vertex of $C$.

We now construct a new tree $T'$ by deleting the edge between $b$ and $v$ and adding an edge between $b$ and $w$.
Since $d_{T'}(\ell, c) \le d_{T}(\ell, c)+1 \le \frac d2$ we can note that $T'$ has also diameter $d$.
Furthermore $T'$ has a larger Wiener index than $T$ since the distance between the vertices of the path from $\ell$ to $b$ and vertices not belonging to this path nor the component $C$ has increased with $1.$
This implies $T$ was not extremal, contradiction with the original assumption.
\end{proof}

We first define some help functions which will be used in the computations.

\begin{defi}
The function $g_1$ satisfies $g_1(n)=n-1$ for all $ n \in \mathbb N$.
\\For every $k>1$ and $n \in \mathbb N$, $g_k(n)=n+g_{k-1}(n)-2 \sqrt{g_{k-1}(n)}.$
\\For every $k \in \mathbb N$, the function $h_k$ is defined by $h_k(n) =k \cdot n - \frac 43 \sqrt k (k-1)\sqrt n + \frac23(k-1)^2$ for every $n \in \mathbb N.$
\end{defi}

\begin{defi}
Let the Wienerindex of a graph $G$ whose vertices $v$ have been assigned a multiplicity $\omega(v)$ be equal to $W(G)=\sum_{\{u,v\} \subset V} \omega(u)\omega(v) d(u,v).$
The function $dW$ gives the maximal Wiener index $dW(t,n,i)$ of a rooted tree of height at most $i$ with $t$ non-root vertices having multiplicity $1$ and a single root vertex having multiplicity $n-t$.
\end{defi}

\begin{prop}\label{gh}
For every $k,n$ with $n \ge 5$, we have $g_k(n) < h_k(n).$
\end{prop}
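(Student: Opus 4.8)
The plan is to argue by induction on $k$, the base case $k=1$ being immediate since $g_1(n)=n-1<n=h_1(n)$.

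For the inductive step I would first isolate the auxiliary function $\phi(x)=x-2\sqrt{x}$, so that the recursion reads $g_k(n)=n+\phi(g_{k-1}(n))$. Since $\phi'(x)=1-x^{-1/2}$, the function $\phi$ has global minimum $\phi(1)=-1$ and is strictly increasing on $[1,\infty)$. From $\phi\ge -1$ one gets, by a trivial side induction, that $g_k(n)\ge n-1\ge 4$ for all $k\ge 1$ and $n\ge 5$; in particular $g_{k-1}(n)$ and $h_{k-1}(n)$ — the latter being $>g_{k-1}(n)$ by the inductive hypothesis — both lie in $[1,\infty)$, where $\phi$ is increasing. Hence
$$g_k(n)=n+\phi(g_{k-1}(n))<n+\phi(h_{k-1}(n))=n+h_{k-1}(n)-2\sqrt{h_{k-1}(n)},$$
and it remains to prove the purely algebraic inequality $n+h_m(n)-2\sqrt{h_m(n)}\le h_{m+1}(n)$ with $m:=k-1\ge 1$.

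To handle this, substitute the closed form of $h_m$. The coefficient of $n$ on both sides is $m+1$, so after cancelling it the claim becomes
$$\tfrac{4}{3}\sqrt{n}\bigl(m\sqrt{m+1}-(m-1)\sqrt{m}\bigr)-\tfrac{4m-2}{3}\le 2\sqrt{h_m(n)}.$$
I would bound the bracket using $\sqrt{m+1}-\sqrt{m}\le \tfrac{1}{2\sqrt m}$, which gives $m\sqrt{m+1}-(m-1)\sqrt{m}=m(\sqrt{m+1}-\sqrt m)+\sqrt m\le \tfrac32\sqrt m$, so it suffices to show $2\sqrt{mn}-\tfrac{4m-2}{3}\le 2\sqrt{h_m(n)}$. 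Note that $h_m(n)>0$ always (viewed as a quadratic in $\sqrt n$ it has non-positive discriminant $-\tfrac89 m(m-1)^2$ and positive leading coefficient, and $h_1(n)=n$), so if the left-hand side is non-positive we are done; otherwise both sides are non-negative and squaring turns the inequality into an equivalent one which, after expanding and simplifying, reads $24\sqrt{mn}+8m^2-32m+20\ge 0$. Since $8m^2-32m+20$ has minimum value $-12$ while $24\sqrt{mn}\ge 24\sqrt 5>12$ for $m\ge 1$ and $n\ge 5$, this holds. Combining the strict inequality coming from the inductive hypothesis with the (non-strict) algebraic estimate gives $g_k(n)<h_k(n)$, closing the induction.

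The only part that is not entirely mechanical is this last algebraic inequality, the obstacle being a usable lower bound for the awkward $2\sqrt{h_m(n)}$ term; the device that makes it go through is to first replace $m\sqrt{m+1}-(m-1)\sqrt m$ by $\tfrac32\sqrt m$, after which the inequality can be squared and collapses to a quadratic in $m$ that is comfortably dominated by $24\sqrt{mn}$ once $n\ge 5$.
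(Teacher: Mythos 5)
Your proof is correct and follows essentially the same route as the paper: induction on $k$, using that $x\mapsto x-2\sqrt{x}$ is increasing on $[1,\infty)$ together with $g_k(n)\ge 4$ to replace $g_{k-1}$ by $h_{k-1}$, and then an elementary algebraic verification that $n+h_m(n)-2\sqrt{h_m(n)}\le h_{m+1}(n)$. The paper finishes by lower-bounding $\sqrt{h_m(n)}$ via $h_m(n)\ge\bigl(\sqrt{mn}-\tfrac23(m-1)\bigr)^2$ and the estimate $\sqrt{m(m+1)}<m+\tfrac12$, which is the same pair of ingredients you use (in the equivalent form $\sqrt{m+1}-\sqrt m\le\tfrac1{2\sqrt m}$ plus squaring), just organized slightly differently.
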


\begin{proof}
We prove this by induction.
For $k=1$, we get $n-1 < n$ which is true.
Note that $g_k(n) \ge 4$ for every $k$ and $n\ge 5$ and thus 
\begin{align*}
g_{k+1}(n)&=n+g_{k}(n)-2 \sqrt {g_k(n)} 
\\&< n+h_k(n)-2 \sqrt{h_k(n)}
\\&\le n+ k \cdot n - \frac 43 \sqrt k (k-1)\sqrt n + \frac23(k-1)^2 -2\left(\sqrt k \sqrt{n} -\frac23(k-1)\right)
\\&\le (k+1)n-\frac43 \sqrt{k+1}k\sqrt n +\frac23 k^2
\\&=h_{k+1}(n),
\end{align*}
since $h_k(n) \ge \left(\sqrt k \sqrt{n} -\frac23(k-1)\right)^2 $, $ \sqrt{k(k+1)}  <(k+0.5) $ and $\frac{2}3 \left( k^2-(k-1)^2 \right)=\frac{4}{3} k-\frac 23 > \frac 43 (k-1),$ which proves the induction step.\end{proof}
 
\begin{prop}\label{prop5.5}
For every $t,i \ge 1$ and $n \ge 5$, the function $dW$ satisfies 
\begin{equation} \label{dW_g}
dW(t,n,i) \le t g_i(n).
\end{equation} 

\end{prop}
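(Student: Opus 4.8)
The plan is to induct on the height bound $i$, proving the inequality for every $t\ge 1$ at once (the case $t=0$ is trivial, since then $dW(0,n,i)=0$). For the base case $i=1$, the only rooted tree of height at most $1$ with $t$ non-root vertices is the star with the root as centre: the $t$ leaves (multiplicity $1$) are at distance $1$ from the root (multiplicity $n-t$) and at distance $2$ from each other, so $dW(t,n,1)=(n-t)t+2\binom t2=t(n-1)=t\,g_1(n)$, with equality.

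For the inductive step, the main tool is the standard cut formula for the Wiener index of a tree with multiplicities: deleting the edge joining a non-root vertex $v$ to its parent separates off the subtree rooted at $v$, of total multiplicity $s_v$ (the number of its vertices), while the other side has total multiplicity $n-s_v$, so $W(T)=\sum_{v\ne\text{root}}s_v(n-s_v)$. Let $T$ be an extremal rooted tree of height at most $i$, with root $r$, children $c_1,\dots,c_m$ and corresponding subtrees $T_1,\dots,T_m$ of sizes $t_1,\dots,t_m$; then $\sum_jt_j=t$ and each $T_j$ (rooted at $c_j$) has height at most $i-1$. Since for $v\in V(T_j)$ the subtree of $T$ rooted at $v$ coincides with the subtree of $T_j$ rooted at $v$, the $s_v$ are the same whether computed in $T$ or in $T_j$, and separating the term $v=c_j$ (where $s_{c_j}=t_j$) yields
\[
W(T)=\sum_{j=1}^m\Bigl(t_j(n-t_j)+\sum_{v\in V(T_j)\setminus\{c_j\}}s_v(n-s_v)\Bigr).
\]
For each $j$, the inner sum equals the Wiener index of $T_j$ viewed as a rooted tree with root $c_j$ (of multiplicity $n-(t_j-1)$) and $t_j-1$ non-root vertices of multiplicity $1$; being of height at most $i-1$, this is at most $dW(t_j-1,n,i-1)\le(t_j-1)\,g_{i-1}(n)$ by the induction hypothesis. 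Hence $W(T)\le\sum_{j=1}^m\bigl(t_j(n-t_j)+(t_j-1)\,g_{i-1}(n)\bigr)$.

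It then remains to verify $\sum_{j=1}^m\bigl(t_j(n-t_j)+(t_j-1)g\bigr)\le t(n+g-2\sqrt g)=t\,g_i(n)$, where $g:=g_{i-1}(n)$, for every partition $t=t_1+\dots+t_m$ into parts $t_j\ge1$. Expanding, the left-hand side equals $(n+g)t-\sum_jt_j^2-mg$; by the power-mean inequality $\sum_jt_j^2\ge t^2/m$ and then AM--GM, $\sum_jt_j^2+mg\ge t^2/m+mg\ge 2t\sqrt g$, which gives the bound. This uses $g=g_{i-1}(n)\ge 0$, which is immediate by induction from $g_k(n)=n+(\sqrt{g_{k-1}(n)}-1)^2-1\ge n-1\ge 4$ for $n\ge 5$, as already observed in the proof of Proposition~\ref{gh}.

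I do not expect a real obstacle. The two points that need care are the multiplicity bookkeeping when passing from $T$ to the rooted subtrees $T_j$ — precisely, that demoting $c_j$ from a weight-$1$ vertex to the weight-$(n-t_j+1)$ root of $T_j$ is exactly what reproduces the definition of $dW$ and feeds the recursion for $g$ — and checking that the two-parameter optimization over the number $m$ of parts and the shape of the partition is genuinely solved by the equal-parts/AM--GM estimate rather than by some asymmetric choice. Both are routine once the cut formula is in place.
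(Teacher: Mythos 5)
Your proof is correct and follows essentially the same route as the paper: induction on the height $i$, decomposition at the root into the child subtrees, the bound $\sum_j t_j^2\ge t^2/m$ followed by AM--GM on $t^2/m+m\,g_{i-1}(n)$, which is exactly the paper's chain of QM--AM and AM--GM estimates (after the relabelling $t_j\mapsto t_j+1$). The only cosmetic difference is that you derive the intermediate identity via the edge-cut formula $W(T)=\sum_{v\ne r}s_v(n-s_v)$, whereas the paper obtains the same expression by contracting each child subtree to a single vertex of multiplicity $t_j+1$ and invoking the recursive meaning of $dW$ directly; both are sound.
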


\begin{proof}
We prove this by induction.
When $i=1$, we have by definition that $dW(t,n,1)= 2\binom{t}{2}+t(n-t)=nt-t=t g_1(n)$.

When $i>1$, we let $k$ be the degree of the root in the optimal configuration.
So the root is connected to $k$ rooted subtrees of height at most $i-1$.
Assume the $j^{th}$ rooted subtree has $t_j$ non-root vertices.	
So $t=k+\sum_{j=1}^k t_j.$
Note that $dW(t_j,n,i-1)$ gives the difference between the Wiener index of the whole rooted tree and the rooted tree, where the $j^{th}$ subtree has been replaced by a single vertex of multiplicity $t_j+1$.
Using this, together with the induction hypothesis for $dW(t,n,i-1)$ and the inequalities between the quadratic mean, the arithmetic mean and the geometric mean (QM-AM, AM-GM), we get
\belowdisplayskip=-12pt
\begin{align*}
dW(t,n,i)&= \sum_{j=1}^k dW(t_j,n,i-1) + t(n-t)+2\sum_{1 \le j_1 < j_2 \le k} (t_{j_1}+1)(t_{j_2}+1)\\
& \le (t-k)g_{i-1}(n)+t(n-t)+t^2-\sum_{j=1}^k (t_j+1)^2\\
& \le (t-k)g_{i-1}(n)+t(n-t)+t^2-k\left(\frac tk \right)^2\\
&=   t g_{i-1}(n) +nt - \left( k g_{i-1}(n) +\frac{t^2}k \right)\\
&\le t g_{i-1}(n) +nt - 2 \sqrt{  k g_{i-1}(n) \frac{t^2}k }\\
&= t g_i(n).
\end{align*} 
\end{proof}

\begin{thr}\label{treedw}
Let $T$ be a tree of diameter $d$ with $n$ vertices.\\
If $d=2\ell$ for some integer $\ell$, we have 
$W(T) \le (n-1) g_{\ell}(n).$\\
If $d=2\ell+1$ for some integer $l$,
we have $W(T) \le (n-2)g_{\ell}(n) + \frac{n^2}{4}.$
\end{thr}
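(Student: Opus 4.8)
The plan is to reduce the theorem to the weighted rooted-tree bound of Proposition~\ref{prop5.5} by decomposing an extremal tree at its center(s). First I would recall the preceding lemma, which tells us that in an extremal tree every leaf lies at distance at least $\frac{d-1}{2}$ from a central vertex. Combined with the standard fact that a tree has either one central vertex (when $d=2\ell$) or a central edge (when $d=2\ell+1$), this pins down the coarse shape: in the even case, deleting the center $c$ leaves rooted subtrees of height at most $\ell$; in the odd case, deleting the central edge $c_1c_2$ leaves two rooted forests, and each vertex is within distance $\ell$ of the nearer endpoint.

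For the even case $d=2\ell$, I would root the tree at $c$ and apply Proposition~\ref{prop5.5} directly. Think of $c$ as a root of multiplicity $n-t$ with $t=n-1$ non-root vertices, each of multiplicity $1$; the ordinary Wiener index $W(T)$ is then exactly the weighted Wiener index $dW(n-1,n,\ell)$ of this configuration (the weight $n-1$ on the root is irrelevant since the root appears in no pair once we count all genuine vertices — more precisely, $W(T)$ equals the weighted Wiener index with every vertex weight $1$, which is bounded above by the optimal $dW(n-1,n,\ell)$). Proposition~\ref{prop5.5} gives $dW(n-1,n,\ell)\le (n-1)\,g_\ell(n)$, which is the claimed bound. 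The one subtlety is checking that $W(T)$ really is bounded by $dW(n-1,n,\ell)$ and not something larger: since $T$ rooted at $c$ is itself a rooted tree of height $\le\ell$ with $n-1$ non-root vertices of multiplicity $1$ and a root of multiplicity $1\le n-(n-1)$... here I would instead set the root multiplicity to $1$ and note $dW$ is monotone, or simply observe that $W(T)$ is one admissible value in the maximization defining $dW(n-1,n,\ell)$ once the root weight is taken to be $1$; a short monotonicity remark (increasing a weight only increases the weighted index) handles the discrepancy between root weight $1$ and root weight $n-t=1$, which in fact coincide here, so no issue arises.

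For the odd case $d=2\ell+1$, delete the central edge $c_1c_2$, obtaining two rooted trees $T_1\ni c_1$, $T_2\ni c_2$, each of height at most $\ell$, with $|V(T_1)|+|V(T_2)|=n$. Write $|V(T_i)|=n_i$. Pairs within $T_i$ contribute at most the weighted quantity $dW(n_i-1,n,\ell)$ after collapsing the opposite side — more carefully, the contribution of $T_1$'s internal pairs plus the $c_1$--to--$T_2$-bridge terms is captured by giving $c_1$ multiplicity $n_2$ and the other $n_1-1$ vertices multiplicity $1$, so it is at most $dW(n_1-1,n,\ell)\le (n_1-1)g_\ell(n)$, and symmetrically for $T_2$; but then the cross pairs between $V(T_1)\setminus c_1$ and $V(T_2)\setminus c_2$ get double-counted through the roots, so I would instead split the accounting cleanly as (internal to $T_1$) $+$ (internal to $T_2$) $+$ (across the edge), bound the first two by $(n_1-1)g_\ell(n)$ and $(n_2-1)g_\ell(n)$ via Proposition~\ref{prop5.5} with root weight $1$, and bound the across-term crudely. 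The across-term is $\sum_{u\in T_1,v\in T_2} d(u,v)$; using $d(u,v)\le d = 2\ell+1$ this is at most $(2\ell+1)n_1n_2\le (2\ell+1)\frac{n^2}{4}$, which is too weak. I would sharpen it by writing $d(u,v)=d(u,c_1)+1+d(c_2,v)$ and noting that the height constraints force $\sum_u d(u,c_1)$ and $\sum_v d(v,c_2)$ to be controlled; the clean route is to fold the $d(u,c_1)$ part into $T_1$'s weighted index (root weight $n_2$) and the $d(c_2,v)$ part into $T_2$'s (root weight $n_1$), leaving only the $+1$ per cross pair, i.e. $n_1n_2\le \frac{n^2}{4}$. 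Then $W(T)\le dW(n_1-1,n,\ell)+dW(n_2-1,n,\ell)+n_1n_2 \le (n_1-1)g_\ell(n)+(n_2-1)g_\ell(n)+\frac{n^2}{4}=(n-2)g_\ell(n)+\frac{n^2}{4}$, as desired.

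The main obstacle is getting the bookkeeping in the odd case exactly right: the weighted-index machinery naturally wants to absorb the ``distance to the root'' of every vertex, and one must be careful not to double-count the path-through-the-central-edge contributions when combining the two sides. I expect the even case to be essentially immediate from Proposition~\ref{prop5.5}, and essentially all the work to be in setting up the odd-case decomposition so that each term is legitimately an instance of $dW(\cdot,n,\ell)$ with the correct root multiplicity, after which Proposition~\ref{prop5.5} and the trivial $n_1n_2\le n^2/4$ finish it.
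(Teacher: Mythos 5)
Your proof is correct and follows essentially the same route as the paper: the even case is Proposition~\ref{prop5.5} applied at the central vertex, and the odd case splits at the central edge, absorbs the $d(x,c_1)$ and $d(c_2,y)$ parts of the cross pairs into the two weighted rooted-tree indices $dW(n_1-1,n,\ell)$ and $dW(n_2-1,n,\ell)$, and bounds the leftover $n_1n_2\le n^2/4$ --- exactly the paper's $dW(t_u,n,\ell)+dW(t_v,n,\ell)+(t_u+1)(t_v+1)$. The only nitpick is that the root of $T_1$ should carry multiplicity $n_2+1$ rather than $n_2$ (namely $n_2$ for the cross pairs plus $1$ for the genuine pairs $\{x,c_1\}$), which is precisely the $n-t$ prescribed in the definition of $dW$, so your final displayed chain of inequalities is the right one.
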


\begin{proof}
If $d$ is even, the central vertex can be seen as the root of the tree.	
Hence by definition, $W(T) = dW(n-1,n,\ell)\le (n-1) g_{\ell}(n)$.

If $d$ is odd, there are two central vertices $u$ and $v$.
Let $S_u$ be the set of vertices closer to $u$ than to $v$ (and different from $u$) and define $S_v$ similarly.
Let $S_u$ and $S_v$ have sizes $t_u$ and $t_v$ respectively.
Then the Wiener index of the tree $T$ satisfies
\belowdisplayskip=-12pt \begin{align*}
W(T) &\le dW(t_u,n,\ell)+dW(t_v,n,\ell)+(t_u+1)(t_v+1)\\
&\le t_u g_{\ell}(n) + t_v g_{\ell}(n) + \left( \frac{n}{2} \right)^2\\
&=(n-2)g_{\ell}(n) + \frac{n^2}{4}.
\end{align*} \end{proof}

As a corollary of Theorem~\ref{treedw}, we get the following.

\begin{cor}

Writing $W^T_{n,d}$ for the maximum Wiener index among all trees with order $n$ and diameter $d$, 

\begin{itemize}
	\item $W^T_{n,3}= \lfloor \frac{5n^2}{4}-3n+2 \rfloor$
	\item $W^T_{n,4} \le 2n^2-2n\sqrt{n-1}-3n+2\sqrt{n-1}+1$
	\item $W^T_{n,5} \le \frac{9}{4}n^2 -2n \sqrt{n-1} +O(n)$
	\item $W^T_{n,6} \le 3n^2-2(1+\sqrt 2) n^{3/2} +O(n)$
\end{itemize}
\end{cor}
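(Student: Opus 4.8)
The plan is to specialize Theorem~\ref{treedw} to $d\in\{3,4,5,6\}$, i.e.\ to $\ell=\lfloor d/2\rfloor\in\{1,2,3\}$, after making the functions $g_\ell$ explicit by unwinding the recursion $g_k(n)=n+g_{k-1}(n)-2\sqrt{g_{k-1}(n)}$. Concretely one records $g_1(n)=n-1$, $g_2(n)=2n-1-2\sqrt{n-1}$, and $g_3(n)=3n-1-2\sqrt{n-1}-2\sqrt{\,2n-1-2\sqrt{n-1}\,}$; then each of the four items is obtained by substituting the appropriate $g_\ell$ into the even case $W(T)\le(n-1)g_\ell(n)$ or the odd case $W(T)\le(n-2)g_\ell(n)+\tfrac{n^2}{4}$ of Theorem~\ref{treedw}.

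For $d=4$ (even, $\ell=2$) one just expands $(n-1)(2n-1-2\sqrt{n-1})$ and rewrites $2(n-1)^{3/2}=2n\sqrt{n-1}-2\sqrt{n-1}$, which reproduces the claimed expression exactly with no error term. For $d=5$ (odd, $\ell=2$) one expands $(n-2)(2n-1-2\sqrt{n-1})+\tfrac{n^2}{4}=\tfrac94 n^2-5n+2-2(n-2)\sqrt{n-1}$ and absorbs $-5n+2+4\sqrt{n-1}$ into $O(n)$ after writing $2(n-2)\sqrt{n-1}=2n\sqrt{n-1}-4\sqrt{n-1}$. For $d=3$ (odd, $\ell=1$) Theorem~\ref{treedw} gives $W^T_{n,3}\le(n-2)(n-1)+\tfrac{n^2}{4}=\tfrac{5n^2}{4}-3n+2$, and since the Wiener index is an integer this yields $W^T_{n,3}\le\lfloor\tfrac{5n^2}{4}-3n+2\rfloor$; for the matching lower bound I would take the balanced double star (two adjacent centres carrying $\lfloor(n-2)/2\rfloor$ and $\lceil(n-2)/2\rceil$ pendant leaves) and check that its Wiener index equals $(n-1)^2+\lfloor(n-2)^2/4\rfloor=\lfloor\tfrac{5n^2}{4}-3n+2\rfloor$, giving equality.

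The only case requiring a genuine estimate is $d=6$ (even, $\ell=3$), where $g_3$ contains the nested radical $\sqrt{\,2n-1-2\sqrt{n-1}\,}$. Here I would use $\sqrt{2n-1-2\sqrt{n-1}}=\sqrt{2n}\,\bigl(1-\tfrac{1+2\sqrt{n-1}}{2n}\bigr)^{1/2}=\sqrt2\,\sqrt n+O(1)$ together with $\sqrt{n-1}=\sqrt n+O(n^{-1/2})$ to obtain $g_3(n)=3n-2(1+\sqrt2)\sqrt n+O(1)$, and then $(n-1)g_3(n)=3n^2-2(1+\sqrt2)n^{3/2}+O(n)$. I expect this asymptotic bookkeeping — in particular confirming that after multiplying out, the coefficient of $n^{3/2}$ is exactly $2(1+\sqrt2)$ and that all remaining terms are $O(n)$ — to be the only real obstacle; the other three items are immediate algebraic specializations of Theorem~\ref{treedw}, the sole extra subtlety being the floor and the double-star construction needed for the equality in the $d=3$ case.
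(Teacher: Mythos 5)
Your proposal is correct and follows essentially the same route as the paper: the corollary is obtained by specializing Theorem~\ref{treedw} with the explicit values $g_1(n)=n-1$, $g_2(n)=2n-1-2\sqrt{n-1}$, $g_3(n)=3n-1-2\sqrt{n-1}-2\sqrt{2n-1-2\sqrt{n-1}}$, exactly as you do, and your asymptotic expansion of the nested radical for $d=6$ and your balanced double-star computation $(n-1)^2+\lfloor (n-2)^2/4\rfloor$ for the $d=3$ equality (which the paper delegates to the cited work of Mukwembi and Vetr\'{i}k together with the integrality/floor observation) both check out.
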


For $d=3$, this is Theorem $3.1$ of \cite{MV} (with a correction of $1$ since $d(u,v)$ was counted twice in that document) where the floor function ensures that equality holds for all $n$.

For $d=4$, equality holds if and only if $n-1$ is a perfect square. For general $n$, the exact value of $W^T_{n,4}$ can differ with respect to the upper bound by more than $1$.

For $d=5$, the upper bound $(n-2)g_2(n)+\frac{n^2}{4}$ can not be attained as we will explain. Equality would imply that we have $t_u=t_v=\frac n2 -1$ and $dW(t_u,n,2)=t_u g_2(n)$ in the proof of Theorem~\ref{treedw}.
To have $dW(t_u,n,2)=t_u g_2(n)$, we need equality when applying the AM-GM inequality in the proof of Proposition~\ref{prop5.5}
and thus $k g_1(n)= \frac{t^2}k$ for $t=\frac n2 -1$. Nevertheless, the equality $k^2(n-1)=\left( \frac{n}{2}-1 \right)^2 $ does not hold for any integer $k$, since $n-1 \nmid (n-2)^2$ for $n \ge 3.$

For $d=6$, equality in every inequality between the arithmetic mean and the geometric mean (AM-GM) is again impossible as was in the case for $d=5.$
In \cite{MV}, the authors make considerable efforts to bound the second order term, achieving $\sqrt 6$ for this specific choice of diameter.

We have managed to obtain good estimates on the second-order term for $d$ in general as $n \to \infty$, with an error of less than $2.5$\%.

\begin{thr}\label{goeieConstantTrees}

When $d$ is even, we have as $n \to \infty$
$$ \frac d2 n^2  -\left( \frac {29}{60}d^{3/2} +o(d^{3/2})\right) n^{3/2}+ O_d(n) \le  W^T_{n,d} \le \frac d2 n^2 - \left( \frac{ \sqrt{2}}3 d^{3/2} + O(\sqrt d) \right) n^{3/2} + O_d(n),$$
while, if $d$ is odd, we have as $n \to \infty$
$$ \frac{d-0.5}2 n^2  -\left( \frac {29}{60}d^{3/2} +o(d^{3/2})\right) n^{3/2}+ O_d(n) \le  W^T_{n,d} \le \frac {d-0.5}2 n^2 - \left( \frac{ \sqrt{2}}3 d^{3/2} + O(\sqrt d) \right) n^{3/2} + O_d(n).$$
\end{thr}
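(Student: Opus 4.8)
I would prove the two bounds separately, and the upper bound falls out almost immediately from the machinery already in place. For $d=2\ell$, Theorem~\ref{treedw} gives $W^T_{n,d}\le (n-1)g_\ell(n)$, and Proposition~\ref{gh} upgrades this to $W^T_{n,d}<(n-1)h_\ell(n)$ with $h_\ell(n)=\ell n-\tfrac43\sqrt\ell(\ell-1)\sqrt n+\tfrac23(\ell-1)^2$. Expanding the product and collecting powers of $n$ yields leading term $\ell n^2=\tfrac d2 n^2$, an $n^{3/2}$-coefficient $-\tfrac43\sqrt\ell(\ell-1)=-\tfrac43\ell^{3/2}+O(\sqrt\ell)$, and everything else $O_d(n)$; since $\tfrac43(d/2)^{3/2}=\tfrac{\sqrt2}{3}d^{3/2}$ this is exactly $\tfrac d2 n^2-(\tfrac{\sqrt2}{3}d^{3/2}+O(\sqrt d))n^{3/2}+O_d(n)$. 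For $d=2\ell+1$, Theorem~\ref{treedw} gives $W^T_{n,d}<(n-2)h_\ell(n)+\tfrac{n^2}{4}$; now the $n^2$-coefficient is $\ell+\tfrac14=\tfrac{d-1}{2}+\tfrac14=\tfrac{d-0.5}{2}$, and replacing $(d-1)/2$ by $d/2$ in $\tfrac43\sqrt\ell(\ell-1)$ costs only $O(\sqrt d)$, so the $n^{3/2}$-coefficient is again $\tfrac{\sqrt2}{3}d^{3/2}+O(\sqrt d)$. This step is purely routine asymptotic algebra.

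\textbf{Lower bound, even case.} Here I would simply observe that the graph constructed in the proof of Theorem~\ref{lelijk} is already a tree: a central vertex joined to $k=\lfloor\sqrt{n/\ell}\rfloor$ copies of the self-similar broom of Figure~\ref{fig:subtrees}, each of height $\ell$. Hence that proof already shows $W^T_{n,d}\ge\tfrac d2 n^2+\left(\tfrac{-5}{4}-\tfrac14\tfrac1{1-2c^2}+\tfrac{c}{2-c}\right)\tfrac{d^{3/2}}{2^{3/2}}n^{3/2}+o(d^{3/2}n^{3/2})$, and the choice $c=0.3825$ makes the coefficient larger than $-\tfrac{29}{60}d^{3/2}$, with the $o(d^{3/2})$ absorbing both the gap to the true optimum and the $O(\sqrt d)n^{3/2}$ and $O_d(n)$ remainders.

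\textbf{Lower bound, odd case.} For $d=2\ell+1$ the construction of Figure~\ref{fig:graph} uses a central clique and is not a tree, so I would replace that clique by a single edge $uv$ and distribute the $k$ brooms (each of height $\ell=(d-1)/2$) as evenly as possible between $u$ and $v$. Then leaves of brooms hanging off different endpoints are at distance $d$, while leaves of brooms off the same endpoint are at distance $d-1$; since these two families of leaf-pairs are asymptotically equinumerous, the leaf--leaf contribution becomes $\tfrac{k^2A^2}{2}\cdot\tfrac{2d-1}{2}=\tfrac{d-0.5}{2}n^2+\cdots$, exactly matching the new leading term, while the leaf-to-non-leaf and same-broom contributions, and hence the entire $n^{3/2}$-term, are unchanged up to an $O(\sqrt d)n^{3/2}$ error (the one extra step across $uv$ is of lower order). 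So the same constant $\tfrac{29}{60}$ reappears.

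\textbf{Main obstacle.} The genuine work is all on the lower-bound side, and it is inherited from (or parallels) the proof of Theorem~\ref{lelijk}: re-running the three sum-of-distances computations for the self-similar broom (leaves of distinct brooms; leaf versus non-leaf of another broom; leaves of the same broom), correctly handling the geometric sums $1+c+2c^2+4c^3+\cdots$, extracting the coefficient $\tfrac1{2^{3/2}}\left(\tfrac54+\tfrac14\tfrac1{1-2c^2}-\tfrac c{2-c}\right)$, and verifying via the stationarity condition $f'(c)=0$ (a quintic, with $f(c)=-\tfrac14\tfrac1{1-2c^2}+\tfrac c{2-c}$) that its minimum over $c\in(0,\tfrac12)$ lies below $\tfrac{29}{60}$. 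In the odd case the only extra point to check is that swapping the central clique of Figure~\ref{fig:graph} for a single edge shifts the leading term to $\tfrac{d-0.5}{2}n^2$ and leaves the second-order analysis otherwise intact. Everything else — the upper bound and the bookkeeping of the lower-order $O_d(n)$ and $O(\sqrt d)n^{3/2}$ terms — is mechanical.
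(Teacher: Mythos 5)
Your proposal is correct and follows essentially the same route as the paper: the upper bounds come from Theorem~\ref{treedw} combined with Proposition~\ref{gh} (exactly your expansion of $(n-1)h_\ell(n)$, resp.\ $(n-2)h_\ell(n)+\frac{n^2}{4}$), and the lower bounds come from the self-similar broom construction in the proof of Theorem~\ref{lelijk} with $c=0.3825$. The only place you go beyond the paper's very terse proof is the odd-$d$ lower bound, where the paper silently relies on the reader replacing the central clique of Figure~\ref{fig:graph} by a single edge with the brooms split evenly between its endpoints --- precisely the adaptation you spell out, and it is the right one, since a tree of odd diameter has only about half of its leaf pairs at distance $d$, which forces the leading term $\frac{d-0.5}{2}n^2$.
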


\begin{proof}
The lower bounds on $W^T_{n,d}$ are proven in the proof of Theorem~\ref{lelijk}.
By combining Theorem~\ref{treedw} and Proposition~\ref{gh}, we derive the upper bounds.
\end{proof}

The techniques of this section naturally also apply in the context of the aforementioned conjecture of DeLaVi\~{n}a and Waller. Indeed, this leads to a confirmation of the conjecture for trees. This will appear in a forthcoming work.

\section{An asymptotically extremal upper bound for digraphs}\label{sec4}

In this section we give extremal upper bounds on the Wiener index for digraphs, provided $n$ is large enough.

\begin{thr}\label{digraph_d_1}
Let $d>1$ and $n\ge4d^3$ be integers and $D$ be a digraph of order $n$ with diameter $d$.
Then $W(D)\le dn^2-d^2n+\frac{(d-1)d^2}{2}$.
Equality holds if and only if $D$ is isomorphic to the graph $\DP{n}{d}$ presented in Figure~\ref{fig:digraph_d}, i.e. a directed cycle of length $d$ with a blow-up of one vertex by an independent set of size $n-d+1.$

\end{thr}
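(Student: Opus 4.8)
The plan is to first establish the lower bound by analysing the candidate digraph $\DP{n}{d}$, and then prove that it is the unique optimum. For the construction, label the independent set $\ell_1,\dots,\ell_{n-d+1}$ and let the directed cycle be $v_1 \to v_2 \to \cdots \to v_{d-1} \to v_1$ with the blown-up vertex $v_0$ replaced by the $\ell_i$'s, so that $\ell_i \to v_1$ and $v_{d-1} \to \ell_i$ for every $i$. A direct computation shows $d(\ell_i,\ell_j)=d$ for all ordered pairs $i\ne j$, while the distances among the $v_i$'s and between $v_i$'s and $\ell_j$'s are bounded by $d$ and contribute only an $O_d(1)$ correction to the total. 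Summing gives $W(\DP{n}{d}) = d\bigl((n-d+1)^2-(n-d+1)\bigr) + (\text{cycle and mixed terms}) = dn^2 - d^2 n + \frac{(d-1)d^2}{2}$ after simplification; this is routine bookkeeping and I will just record the arithmetic. This yields the lower bound $W_{n,d}^{\mathrm{digraph}} \ge dn^2 - d^2 n + \frac{(d-1)d^2}{2}$.

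For the upper bound, I would argue as outlined in the introduction: the obstruction is that too many ordered pairs cannot simultaneously be at distance $d$. Take a digraph $D$ of order $n$ and diameter $d$, and suppose $W(D)$ is within $O_d(1)$ of the trivial bound $d(n^2-n)$ — otherwise we are done. Then the set of ordered pairs $(u,v)$ with $d(u,v)<d$ has size $O_d(1)$; in particular all but $O_d(1)$ vertices $u$ have $d(u,v)=d$ for all but $O_d(1)$ vertices $v$, and symmetrically for $d(v,u)$. I would then fix such a generic vertex $u$ and look at a shortest directed path $u=w_0 \to w_1 \to \cdots \to w_d = x$ realising the diameter from $u$: along this path, for a "typical" target vertex $v$ at distance $d$ from $u$, the path from $u$ to $v$ must pass close to $w_1$, and then $d(w_1, v) = d-1 < d$, forcing $v$ to be one of the few "non-generic" vertices — unless $w_1$ itself is non-generic. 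Iterating this along the cycle structure forces a vertex $r$ (playing the role of $v_1$) through which almost all shortest paths out of the $\ell$-set pass, and symmetrically a vertex playing the role of $v_{d-1}$ into which they all return; the $n - O_d(1)$ generic vertices then each have out-neighbourhood $\{r\}$ and in-neighbourhood $\{v_{d-1}\}$ up to the exceptional set, and the remaining $d-1$ non-generic vertices must form a directed $(d-1)$-cycle through $r$ and $v_{d-1}$ with no shortcuts, i.e. exactly the structure of $\DP{n}{d}$. Once the structure is pinned down, extremality and uniqueness follow by the same distance computation as in the lower-bound step, now run in reverse: any deviation from $\DP{n}{d}$ creates at least one extra ordered pair at distance $<d$, strictly decreasing $W$.

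The main obstacle is the middle step: turning "$W(D)$ is nearly extremal" into the rigid claim "$n-O_d(1)$ vertices share a common out-neighbour $r$ and a common in-neighbour". The delicate point is controlling the propagation of the exceptional set: each time I say "almost all shortest paths pass near $w_1$, hence $w_1$ is generic or $v$ is exceptional", the exceptional set can grow, and I must ensure it stays $O_d(1)$ uniformly — this is where the hypothesis $n \ge 4d^3$ enters, giving enough room that $d$ rounds of this argument cannot exhaust a linear-sized generic set. I expect to formalise this with a counting/pigeonhole argument on the bipartite "distance-$d$" relation, tracking for each intermediate index $i \in \{1,\dots,d-1\}$ the number of vertices $v$ for which the $u$-to-$v$ path does not pass through the designated vertex $w_i$, and showing inductively that this count is bounded by a function of $d$ alone. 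The quantitative threshold $4d^3$ should drop out of making these bounds explicit, and I would then verify it suffices for every step rather than optimising it.
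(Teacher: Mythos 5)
Your plan has a genuine gap at the very start of the upper-bound argument, and it propagates through everything that follows. You reduce to the case that ``$W(D)$ is within $O_d(1)$ of the trivial bound $d(n^2-n)$ --- otherwise we are done.'' But the quantity you must bound $W(D)$ by, namely $dn^2-d^2n+\frac{(d-1)d^2}{2}$, lies $(d^2-d)n-O_d(1)=\Theta_d(n)$ \emph{below} the trivial bound, so ``otherwise we are done'' is false: a digraph with, say, $W(D)=dn^2-2dn$ is neither within $O_d(1)$ of $d(n^2-n)$ nor below the claimed bound once $d\ge 3$. Consequently your derived hypothesis that ``the set of ordered pairs $(u,v)$ with $d(u,v)<d$ has size $O_d(1)$'' is not the right one --- it even fails for the extremal digraph $\DP{n}{d}$ itself, which has $\Theta_d(n)$ ordered pairs at distance less than $d$ (every pair involving one of the $d-1$ path vertices $u_i$, e.g.\ $d(u_{d-1},\ell_j)=1$ for all $j$). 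The structural analysis you build on top of this therefore starts from a false premise, and the plan as written never produces the second-order term $-d^2n$, let alone the exact constant $\frac{(d-1)d^2}{2}$.

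The missing idea, which is the engine of the paper's proof, is a per-vertex \emph{deficit} bound: setting $\Delta_u=\sum_{v\ne u}\left(2d-d(u,v)-d(v,u)\right)$, every vertex satisfies $\Delta_u\ge d^2-d$, simply because each of the distances $1,2,\dots,d-1$ from $u$ and to $u$ must be realised by some vertex. Taking $u$ of minimum deficit $d^2-d+k$, the set $B_u$ of vertices not at mutual distance $d$ from $u$ has size at most $2d-2+k$, and --- the key localisation step --- every $v$ in the complementary set $A_u$ has deficit at least $d^2-d$ concentrated on $B_u$, because the intermediate vertices of shortest $u$--$v$ and $v$--$u$ paths all lie in $B_u$. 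Summing these deficits over the $n-O_d(1)$ vertices of $A_u$ without double counting yields $W(D)\le d(n^2-n)-(d^2-d)\lvert A_u\rvert-\frac k2\lvert A_u\rvert$, which simultaneously gives the exact numerical bound, forces $k=0$ when $n\ge 4d^3$, and produces a vertex $v\in A_u$ with $\Delta_v=d^2-d$ whose unique shortest path from $u$ becomes the spine of $\DP{n}{d}$. Your final paragraph correctly identifies that controlling the exceptional set is the crux, but the counting scheme you propose (tracking, for each index $i$, the vertices whose path avoids $w_i$) does not supply the uniform lower bound $d^2-d$ on each deficit nor its localisation to a set of size $O_d(1)$, and without those two ingredients the argument does not close.
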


\begin{proof}

First, we consider the digraph $\DP{n}{d}$ constructed in Figure~\ref{fig:digraph_d}, which is obtained from a directed path from $u_1$ to $u_{d-1}$ of length $d-1$ and $n-d+1$ vertices $\ell_i$ with directed edges from $u_{d-1}$ to $\ell_i$ and from $\ell_i$ to $u_1$ for every $i$.

Note that $d(\ell_i,\ell_j)=d$ and $d(u_i,u_j)+d(u_j,u_i)=d$ when $i \not= j$ and $d(u_i,\ell_j)+d(\ell_j,u_i)=d$ for every $1 \le i \le d-1$ and $1 \le j \le n-d+1.$
Hence 
\begin{align*}
W(\DP{n}{d}) &= (n-d+1)(n-d)d+(d-1)(n-d+1)d+\frac{(d-1)(d-2)}{2}d\\
&=dn^2-d^2n+\frac{(d-1)d^2}{2}.
\end{align*}

Take an extremal graph $D$ of order $n \ge 4d^3$ and diameter $d.$
For every vertex $u$, we look to the total deficit of $u$ which is the sum $\Delta_u$ of deficits, differences between the maximum possible distances and actual distances between $u$ and the other vertices, $$\Delta_u=\sum_{v \in V: v \not=u} \left( 2d-d(u,v)-d(v,u) \right).$$ Note that for every vertex $u$ we have that $\Delta_u \ge d
^2-d.$ For this note that for all $v$, we have $d(u,v) \le d-1$ or for every $1 \le i \le d-1$ there exists a vertex $w$ such that $d(u,w)=i$. Similarly for $d(v,u).$

If $\Delta_u\ge 2d^2$ for all $u \in V$, then
\begin{align*}
W(D) &= \frac12 \sum_{u \in V} \sum_{v \in V: v \not=u} d(u,v) + d(v,u) \\
&\le \frac12 n\left( 2d(n-1)-2d^2 \right) \\
&=dn^2-(d^2+d)n
\end{align*}
and so this graph was not extremal, since $\DP{n}{d}$ has a larger Wiener index.

Let $u$ be a vertex with $\Delta_u=d^2-d+k$ minimal.
Since for every $1 \le i \le d-1$, there is at least one vertex $v$ such that $d(u,v)=i$ and at least one vertex $w$ such that $d(w,u)=i,$ 
there are at most $2d-2+k$ vertices $v$ such that $\min \{ d(u,v), d(v,u) \} <d$.
Let $A_{u}$ be the set of all vertices $v$ such that $d(u,v)=d(v,u)=d$ and $B_{u}=V \backslash (A_u \cup \{u\}).$
We define the total deficit of $v$ with respect to a subset $S$ as 
$$\Delta_v(S)=\sum_{w \in S: w \not=u} \left( 2d-d(w,v)-d(v,w) \right).$$
Since for every $v \in A_u$, we have $d(u,v)=d(v,u)=d$,
we know that for every $1 \le i \le d-1$ there are vertices $w,z \in B_u$ such that $d(v,w)=i$ and $d(z,v)=i$. This implies that $\Delta_v( B_u) \ge d^2-d.$
We also know $\Delta_v( B_u) + \Delta_v( A_u) \ge d^2-d+k$.

Using all this information, we conclude that if $k\ge 1$ (remember $k<d^2+d$), then 
\begin{align*}
W(D) &\le d(n^2-n)- \sum_{v \in A_u} \Delta_v( B_u) - \frac 12 \sum_{v \in A_u} \Delta_v( A_u)\\
&\le d(n^2-n)- d(d-1) \lvert A_u \rvert  - \frac{k}{2}\lvert A_u \rvert \\
& \le d(n^2-n)-\left(d^2-d+\frac k2 \right)(n-2d-k)\\
&=dn^2-d^2n -\frac{k}{2}n +\left(d^2-d+\frac k2 \right)(2d+k)\\
&< dn^2-d^2n+\frac{d^2(d-1)}{2}
\end{align*}

since 
\begin{align*}
n &\ge 4d^3 \ge \frac{4(d^3-d^2)}{k}+2d^2+k\\
&=\frac{2}{k} \left(d^2-d+\frac k2 \right)(2d+k). 
\end{align*}

So if $k \ge 1$, then $D$ has a smaller Wiener index than the digraph $\DP{n}{d}$ and so it would not be extremal.

If $\Delta_v \ge d^2-d+1$ for all vertices $v \in A_u$, the same calculation with $k=1$ for $W(D)$ holds.
So we conclude some $v \in A_u$ satisfies $\Delta_v=d^2-d$.
Let the unique shortest path $\P$ from $u$ to a fixed $v$ with $\Delta_v=d^2-d$ be $u u_1 u_2 u_3 \ldots u_{d-1} v.$
Since $d(u,w)=d$ and $d(w,v)=d$ for every vertex $w$ not belonging to this path, we have directed edges from $w$ to $u_1$ and from $u_{d-1}$ to $w$ for all such $w$.
If there is an edge from $v$ to some vertex $w$ not on $\P$ or to some vertex $u_i$ with $i>2$, we easily get a contradiction with the fact that for every $1 \le j \le d-1$ there is only one vertex $z$ such that $d(v,z)=j$ and the fact that the graph has diameter $d$.
So $v$ can only be connected to $u_1$.
Similarly, the directed edge ending in $u$ should start in $u_{d-1}$ and the extremal digraph is isomorphic to $\DP{n}{d}$. \end{proof}

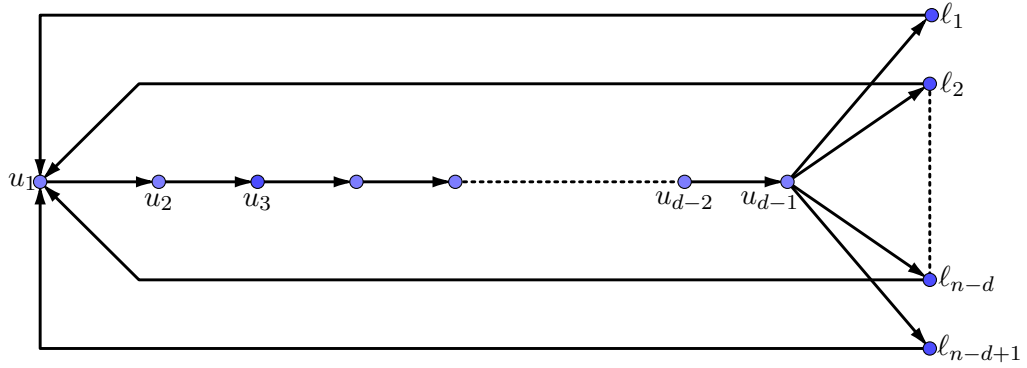
\begin{figure}[h]
\centering
\begin{tikzpicture}[line cap=round,line join=round,>= {Latex[length=3mm, width=1.5mm]} ,x=1.3cm,y=1.3cm]
\clip(-1,-2) rectangle (11,2);
\draw [->,line width=1.1pt] (0.8,0.) -- (2.,0.);
\draw [->,line width=1.1pt] (2.,0.) -- (3.,0.);
\draw [->,line width=1.1pt] (3.,0.) -- (4.,0.);
\draw [->,line width=1.1pt] (4.,0.) -- (5.,0.);
\draw [line width=1.1pt,dotted] (5.,0.)-- (7.32,0.);
\draw [->,line width=1.1pt] (7.32,0.) -- (8.36,0.);
\draw [->,line width=1.1pt] (9.8,1.7) -- (0.8,1.7)--(0.8,0.);
\draw [->,line width=1.1pt] (9.8,1) -- (1.8,1) -- (0.8,0);
\draw [->,line width=1.1pt] (9.8,-1) -- (1.8,-1)-- (0.8,0);
\draw [->,line width=1.1pt] (9.8,-1.7) -- (0.8,-1.7)--(0.8,0.);
\draw [->,line width=1.1pt] (8.36,0.) -- (9.8,-1);
\draw [->,line width=1.1pt] (8.36,0.) -- (9.8,1);
\draw [->,line width=1.1pt] (8.36,0.) -- (9.8,1.7);
\draw [->,line width=1.1pt] (8.36,0.) -- (9.8,-1.7);
\draw [line width=1.1pt,dotted] (9.8,1)-- (9.8,-1);
\draw [fill=xdxdff] (0.8,0.) circle (2.5pt);
\draw [fill=xdxdff] (2.,0.) circle (2.5pt);
\draw [fill=ududff] (3.,0.) circle (2.5pt);
\draw [fill=xdxdff] (4.,0.) circle (2.5pt);
\draw [fill=xdxdff] (5.,0.) circle (2.5pt);
\draw [fill=xdxdff] (7.32,0.) circle (2.5pt);
\draw [fill=xdxdff] (8.36,0.) circle (2.5pt);
\draw [fill=ududff] (9.82,1.7) circle (2.5pt);
\draw [fill=ududff] (9.8,1) circle (2.5pt);
\draw [fill=ududff] (9.8,-1) circle (2.5pt);
\draw [fill=ududff] (9.8,-1.7) circle (2.5pt);
\coordinate [label=center:$u_{2}$] (A) at (2,-0.20); 
\coordinate [label=left:$u_{1}$] (A) at (0.8,0.0); 
\coordinate [label=center:$u_{3}$] (A) at (3,-0.20);
\coordinate [label=left:$u_{d-1}$] (A) at (8.5,-0.2); 
\coordinate [label=center:$u_{d-2}$] (A) at (7.32,-0.20); 

\coordinate [label=right:$\ell_{1}$] (A) at (9.85,1.7); 
\coordinate [label=right:$\ell_{2}$] (A) at (9.85,1); 
\coordinate [label=right:$\ell_{n-d}$] (A) at (9.85,-1); 
\coordinate [label=right:$\ell_{n-d+1}$] (A) at (9.85,-1.7); 

\end{tikzpicture}
\caption{Digraph $\DP{n}{d}$}
\label{fig:digraph_d}
\end{figure}

\section{Conclusion and open problems}

We have determined the magnitude of the second-order term for the largest Wiener index among graphs of order $n$ and diameter $d$.
Since the gap with the trivial upper bound $\frac d2 n^2 $ is of order $d^{3/2}n^{3/2}$, among other reasons, it may be difficult to find sharp, exact upper bounds.

Considering Theorem~\ref{goeieConstantTrees}, the following question could nevertheless be interesting,

\begin{q}
For even $d$ and large $n$, are the graphs of order $n$ and diameter $d$ with largest Wiener index all trees?
\end{q}

In the case of digraphs, we can wonder if we can strengthen the result and give a complete characterization of the extremal graphs.

\begin{q}\label{q:digraph}
For every $d \ge 2$, what is the least value $f(d)$ such that for any $n\ge f(d)$ the digraph $\DP{n}{d}$ is the unique digraph of largest Wiener index over all digraphs of order $n$ and diameter $d$. 
Which graphs are extremal when $d+1<n<f(d)$ or what can be said in that case?
\end{q}

Theorem~\ref{digraph_d_1} showed that $f(d)$ is well-defined.
We note that for $n=d+1$, the directed cycle of order $d+1$ is the unique directed graph attaining the maximal Wiener index.

Furthermore, we show that $f(d)=d+\Omega( \sqrt d)$.

\begin{prop}

The function $f(d)$ in Question~\ref{q:digraph} satisfies $f(d)=d+\Omega( \sqrt d)$. 

\end{prop}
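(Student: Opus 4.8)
The claim is that $f(d)=d+\Omega(\sqrt d)$, i.e.\ there is a constant $c>0$ so that for every $d$ there exists $n$ with $d+1 < n \le d + c\sqrt d$ for which $\DP{n}{d}$ is \emph{not} the unique digraph of maximal Wiener index among digraphs of order $n$ and diameter $d$. The plan is to exhibit, for suitable $n$ just slightly above $d$, a competing digraph $D$ with $W(D) \ge W(\DP{n}{d})$. The natural competitor is a short directed cycle with a \emph{larger} cyclic part but \emph{smaller} blow-up: instead of a directed $d$-cycle with one vertex blown up into an independent set of size $n-d+1$, take a directed $(d+j)$-cycle for some $j \ge 1$ and delete enough structure, or more simply take a digraph built from a directed cycle $C_{d+1}$ (on $d+1$ vertices, which by the remark in the excerpt is extremal for $n=d+1$) and blow up one vertex by an independent set of size $n-d$. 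Call this $D_j$ when the underlying cycle has length $d+j$ and one vertex is blown up to size $n-d-j+1$. Each such digraph has diameter $d$ as long as $n-d-j+1 \ge 1$ and $d+j-1 \le d$ fails — so in fact one should be more careful: a directed $(d+1)$-cycle with a blown-up vertex has diameter $d$ (the blow-up vertices are at mutual distance $d+1-1=d$ through the cycle, wait — one must check this), and this is the key configuration to compare against.

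The key computation is to write $W(D_j)$ as a quadratic in $n$ with the same leading term $dn^2$ (since diameter is $d$ and almost all pairs are at distance $d$) and compare the lower-order terms with $W(\DP{n}{d}) = dn^2 - d^2 n + \tfrac{(d-1)d^2}{2}$. Concretely, in $\DP{n}{d}$ there are $n-d+1$ ``leaf'' vertices and $d-1$ ``path'' vertices; in the competitor with a length-$(d+1)$ underlying cycle there are $n-d$ leaves and $d$ path vertices, and the leaf--leaf distances are still $d$ while path--path and path--leaf pair-sums are governed by the longer cycle. I would carry out the steps in this order: (1) fix the competitor digraph precisely and verify it has diameter exactly $d$ for the relevant range of $n$; (2) compute $W$ of the competitor exactly as a function of $n$ and $d$; (3) set this equal to, or at least $\ge$, $dn^2 - d^2 n + \tfrac{(d-1)d^2}{2}$ and solve for $n$; the difference of the two Wiener indices will be a quadratic in $n$ whose roots are at $n = d + \Theta(\sqrt d)$, so for $n$ in that window the competitor wins (or ties), giving the bound $f(d) \ge d + \Omega(\sqrt d)$; (4) check $n > d+1$ so the example is not the trivial $n=d+1$ case.

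The main obstacle is step (1)/(2): getting the diameter of the competitor right and computing its Wiener index cleanly. A directed cycle of length $d+1$ with one vertex blown up into an independent set $L$ of size $m$ has, between two vertices of $L$, only paths that go around the cycle, of length $d+1 - 0$? No — two distinct blow-up copies $\ell_i, \ell_j$ each have the same in- and out-neighbours on the cycle, so $d(\ell_i,\ell_j)$ equals the length of the shortest directed closed walk from the blow-up position back to itself, which is $d+1$; that would make the diameter $d+1$, not $d$. So the correct competitor is subtler: one should instead blow up a vertex of a directed $(d+1)$-cycle but \emph{remove one edge} of the cycle, or use a cycle of length $d$ but add an extra pendant structure, or — most promisingly — keep the cycle length at $d$ and modify the blow-up (e.g.\ blow up \emph{two} adjacent vertices, or add one vertex with a chord). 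Pinning down a competitor that genuinely has diameter $d$, genuinely beats $\DP{n}{d}$ for some $n = d + \Theta(\sqrt d)$, and for which the Wiener-index arithmetic produces a $\sqrt d$-sized window — that is the real content, and I expect it to come down to a one-parameter family whose break-even point is found by solving a quadratic, with the $\sqrt d$ appearing because the linear-in-$n$ coefficients of the two Wiener indices differ by $\Theta(d^2)$ while the constant terms differ by $\Theta(d^3)$, so the crossover is at $n \approx d^3/d^2 = d$ shifted by $\sqrt{d^3}/\sqrt{\,\cdot\,}$-type terms; I would trust the algebra to deliver $d + \Omega(\sqrt d)$ once the family is fixed.
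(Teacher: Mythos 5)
Your overall strategy is the right one --- exhibit, for some $n$ with $d+1<n\le d+c\sqrt d$, a competitor digraph of diameter $d$ whose Wiener index is at least that of $\DP{n}{d}$ --- and your heuristic for why the window has width $\Theta(\sqrt d)$ is essentially sound. But there is a genuine gap: you never produce a valid competitor, and the competitor \emph{is} the entire content of this proposition. Your one concrete candidate (a directed $(d+1)$-cycle with one vertex blown up into an independent set) you yourself correctly diagnose as having diameter $d+1$, since all blow-up copies share the same in- and out-neighbourhoods on the cycle. The alternatives you then list (``remove one edge'', ``blow up two adjacent vertices'', ``add a chord'') are left unexplored, and none of them is checked for diameter or Wiener index; your closing remark that you ``would trust the algebra to deliver $d+\Omega(\sqrt d)$ once the family is fixed'' concedes exactly the missing step. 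Without a fixed construction, steps (2)--(4) of your plan cannot be carried out, so the argument as written does not establish the proposition.

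For comparison, the paper's competitor $\DC{n}{d}$ keeps the directed $(d+1)$-cycle $u_0u_1\ldots u_d$ but attaches each of the $k=n-(d+1)$ extra vertices $\ell_i$ at a \emph{distinct} position: edges in both directions between $\ell_i$ and $u_i$, together with $u_{i-1}\to\ell_i$ and $\ell_i\to u_{i+1}$. Each $\ell_i$ thus sits in parallel with $u_i$ on the cycle, which is what keeps the diameter at $d$ while the cycle pairs satisfy $d(u_i,u_j)+d(u_j,u_i)=d+1$ rather than the value $d$ achieved by the path pairs of $\DP{n}{d}$. Writing $n=d+1+k$, the difference $W(\DC{n}{d})-W(\DP{n}{d})$ is a downward quadratic in $k$ with constant term $\Theta(d^2)$ and linear and quadratic coefficients both $\Theta(-d)$, so it stays positive precisely for $k=O(\sqrt d)$ --- this is the cleaner form of the crossover calculation you were gesturing at (your version, phrased in $n$ with a ``$d^3/d^2$'' balance, is muddled; the right variable is $k=n-d-1$). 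If you adopt this construction, the rest of your outline goes through.
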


\begin{proof}

For $d+1<n<d+\frac{\sqrt{4d^3+d^2-34d+33}-d+3}{2d-2}$, we construct a graph $\DC{n}{d}$.
We start from a directed cycle $\C=u_0u_1\ldots u_d$ of order $d+1$ and take $k=n-(d+1)$ additional vertices $\ell_i$, with directed edges in both directions between $\ell_i$ and $u_i$ and edges from $u_{i-1}$ to $\ell_i$ and from $\ell_i$ to $u_{i+1}$ for every $1 \le i \le k$.
This digraph has been presented in Figure~\ref{fig:digraph_small n}.

The distances between any two vertices $u_i$ and $u_j$ of $\C$ has not changed by the additional vertices and edges.
For any $i,j$ such that $\lvert i-j \rvert >1$, $d(\ell_i,\ell_j)+d(\ell_j,\ell_i)=d+1$.
When $\lvert i-j \rvert =1$, we have $d(\ell_i,\ell_j)+d(\ell_j,\ell_i)=d+2$, there are $k-1$ such pairs.
Furthermore, $\sum_{u \in \C} d(\ell_i,u)=\sum_{u \in \C} d(u,\ell_i)=1+\frac{d(d+1)}{2}.$

With the previous remarks, we calculate that the Wiener index of $\DC{n}{d}$ equals
\begin{align*}
W(G)&=\frac{d(d+1)^2}{2}+k(d^2+d+2)+\frac{k(k-1)}{2}(d+1)+(k-1)
\end{align*} 

This is larger than the Wiener index of $\DP{n}{d}$ when $$k<\frac{-3d+5+\sqrt{4d^3+d^2-34d+33}}{2d-2}.$$

This implies that $f(d)=d+\Omega( \sqrt d).$
\end{proof}

For $d \in \{2,3\},n=d+2$, the only extremal graphs are $E_n^d$ and $\DP{n}{d}$.
For $d=4,n=6$, there are three extremal graphs, namely $E_6^4,\DP{6}{4}$ and $\DC{6}{4}.$
Here the graphs $E_n^d$ are drawn in Figure~\ref{fig:extremalE}.

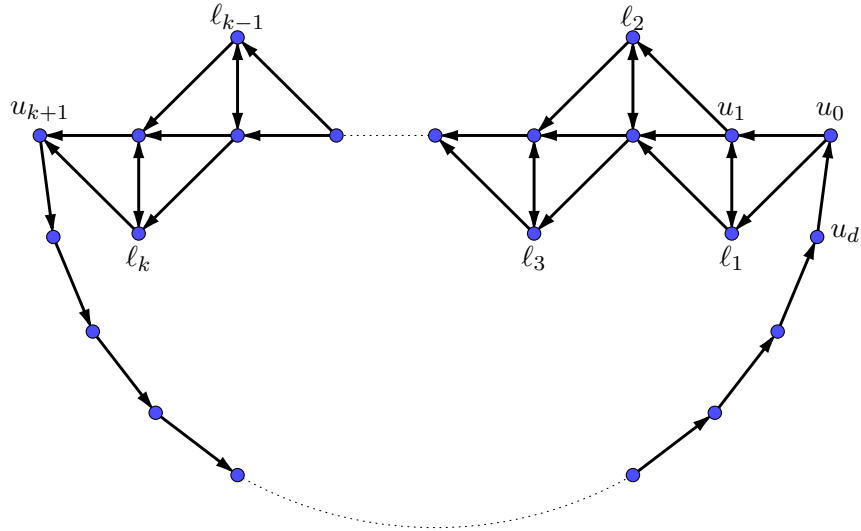
\begin{figure}[h]
\centering
\begin{tikzpicture}
[line cap=round,line join=round,>= {Latex[length=3mm, width=1.5mm]} ,x=1.3cm,y=1.3cm]
\centering [thick,scale=0.4]%
\draw[dotted] (240:4) arc (240:300:4);
\foreach \x in {-3,-2,-1,1,2,3,4} \draw[->,line width=1.1pt] {
	[->,line width=1.1pt]	(\x,0)--  (\x-1,0) };

\foreach \x in {180,195,210,225,300,315,330,345} \draw[->,line width=1.1pt] {
	[->,line width=1.1pt]	(\x:4)--  (\x+15:4) };

\foreach \x in {240,195,210,225,300,315,330,345} \draw[fill=ududff] {
	(\x:4) circle (2.5pt)};

\draw[dotted] (0,0)--(-1,0);

\foreach \x in {-3,1,3}	\draw [<->,line width=1.1pt] {
	(\x,0)-- (\x,-1) };

\foreach \x in {-3,1,3}	\draw [->,line width=1.1pt] {
	(\x+1,0) -- (\x,-1) };

\foreach \x in {-3,1,3}	\draw [->,line width=1.1pt] 
{ (\x,-1) --(\x-1,0)
};

\foreach \x in {-2,2}	\draw [<->,line width=1.1pt] {
	(\x,0)-- (\x,1) };

\foreach \x in {-2,2}	\draw [->,line width=1.1pt] {
	(\x+1,0) -- (\x,1) };

\foreach \x in {-2,2}	\draw [->,line width=1.1pt] 
{ (\x,1) --(\x-1,0)
};

\draw[fill=ududff] (1,-1) circle (2.5pt);
\draw[fill=ududff] (3,-1) circle (2.5pt);
\draw[fill=ududff] (-3,-1) circle (2.5pt);
\draw[fill=ududff] (-2,1) circle (2.5pt);
\draw[fill=ududff] (2,1) circle (2.5pt);
\draw[fill=ududff]  \foreach \x in {-3,-2,-1,1,2,3,4,0,-4} {
	[fill=ududff] (\x,0) circle (2.5pt)};

\coordinate [label=center:$u_{0}$] (A) at (4,0.25); 
\coordinate [label=center:$u_{1}$] (A) at (3,0.25); 
\coordinate [label=center:$u_{k+1}$] (A) at (-4,0.25); 
\coordinate [label=center:$u_{d}$] (A) at (4.15,-1); 

\coordinate [label=center:$\ell_{1}$] (A) at (3,-1.25); 
\coordinate [label=center:$\ell_{2}$] (A) at (2,1.2); 
\coordinate [label=center:$\ell_{3}$] (A) at (1,-1.25); 
\coordinate [label=center:$\ell_{k-1}$] (A) at (-2,1.2); 
\coordinate [label=center:$\ell_{k}$] (A) at (-3,-1.25); 
\end{tikzpicture}

\caption{Digraph $\DC{n}{d}$} 
\label{fig:digraph_small n}
\end{figure}

\begin{figure}[h]
\centering
\begin{tikzpicture}[line cap=round,line join=round,>= {Latex[length=3mm, width=1.5mm]} ,x=1.3cm,y=1.3cm]
\centering [thick,scale=0.4]%
\draw [<->,line width=1.1pt]	(0,0)-- (2,0) ;
\draw [->,line width=1.1pt]	(0,0)-- (1,1.73) ;
\draw [->,line width=1.1pt] (1,1.73)--(2,0);
\draw [->,line width=1.1pt] (2,0)--(1,-1.73);
\draw [->,line width=1.1pt] (1,-1.73)--(0,0) ;

\draw[fill=ududff] (1,1.73) circle (2.5pt);
\draw[fill=ududff] (0,0) circle (2.5pt);
\draw[fill=ududff] (1,-1.73) circle (2.5pt);
\draw[fill=ududff] (2,0) circle (2.5pt);
\end{tikzpicture} \quad
\begin{tikzpicture}[line cap=round,line join=round,>= {Latex[length=3mm, width=1.5mm]} ,x=1.3cm,y=1.3cm]
\centering [thick,scale=0.4]%

\draw [<-,line width=1.1pt]	(0,0)-- (2,0) ;
\draw [<->,line width=1.1pt]	(3,1.73)-- (1,1.73) ;
\draw [->,line width=1.1pt]	(0,0)-- (1,1.73) ;
\draw [->,line width=1.1pt] (1,1.73)--(2,0);
\draw [->,line width=1.1pt] (2,0)--(1,-1.73);
\draw [->,line width=1.1pt] (1,-1.73)--(0,0) ;

\draw[fill=ududff] (1,1.73) circle (2.5pt);
\draw[fill=ududff] (0,0) circle (2.5pt);
\draw[fill=ududff] (1,-1.73) circle (2.5pt);
\draw[fill=ududff] (2,0) circle (2.5pt);
\draw[fill=ududff] (3,1.73) circle (2.5pt);

\end{tikzpicture} 
\quad
\begin{tikzpicture}[line cap=round,line join=round,>= {Latex[length=3mm, width=1.5mm]} ,x=1.3cm,y=1.3cm]
\centering [thick,scale=0.4]%

\draw [<->,line width=1.1pt]	(3,1.73)-- (1,1.73) ;
\draw [<->,line width=1.1pt]	(3,-1.73)-- (1,-1.73) ;
\draw [->,line width=1.1pt]	(0,0)-- (1,1.73) ;
\draw [->,line width=1.1pt] (1,1.73)--(2,0);
\draw [->,line width=1.1pt] (2,0)--(1,-1.73);
\draw [->,line width=1.1pt] (1,-1.73)--(0,0) ;

\draw[fill=ududff] (1,1.73) circle (2.5pt);
\draw[fill=ududff] (0,0) circle (2.5pt);
\draw[fill=ududff] (1,-1.73) circle (2.5pt);
\draw[fill=ududff] (2,0) circle (2.5pt);
\draw[fill=ududff] (3,-1.73) circle (2.5pt);
\draw[fill=ududff] (3,1.73) circle (2.5pt);
\end{tikzpicture}

\caption{Some extremal graphs $E_n^d$ not isomorphic to some $\DP{n}{d}$ or $\DC{n}{d}$}
\label{fig:extremalE}
\end{figure}
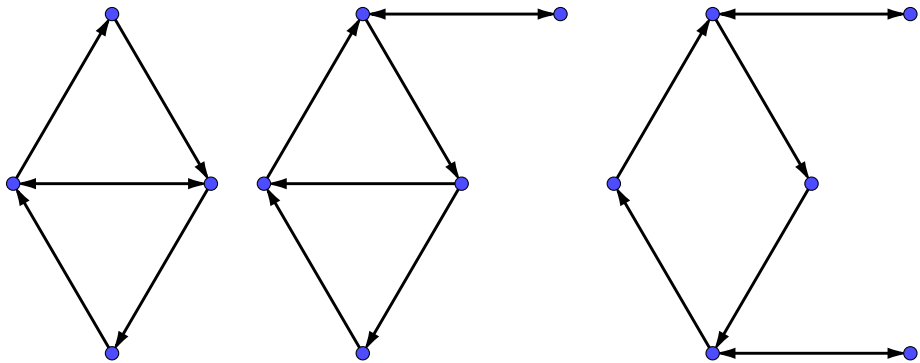

\subsection*{Acknowledgment}
The author is very grateful to Ross Kang and the anonymous referee, for careful reading the manuscript and for several helpful comments.

		\bibliographystyle{abbrv}
		\bibliography{MaxMu}

	\end{document}